\def\expandafter\tikz@node@finish\expandafter{\expandafter\endgroup\expandafter\endpgfonlayer\tikz@node@finish}%
\setlist{nosep}
\newif\ifcomment
\newcommand{\EP}{Erdős-Pósa\xspace}
\title{\EP property of \tripods in directed graphs\thanks{%
  This work was initiated during STRUG: Stuctural Graph Theory Bootcamp, funded by the “Excellence initiative – research university (2020-2026)” of the University of Warsaw.
  Meike Hatzel's research was supported by the Federal Ministry of Education and Research (BMBF), by a fellowship within the IFI programme of the German Academic Exchange Service (DAAD) and by the Institute for Basic Science (IBS-R029-C1).
  Karolina Okrasa's research was supported by the Polish National Science Centre grant no. 2021/41/N/ST6/01507.
  Micha\l{} Pilipczuk's research was supported by the project BOBR that has received funding from the European Research Council (ERC) under the European Union’s Horizon 2020 research and innovation programme with grant agreement No. 948057.
  }}
\DeclareRobustCommand{\authorthing}{
Marcin Bria\'nski\thanks{Department of Theoretical Computer Science, Faculty of Mathematics and Computer Science, Jagiellonian University, Kraków, Poland. \url{marcin.brianski@doctoral.uj.edu.pl}}
\and
Meike Hatzel\thanks{Discrete Mathematics Group, Institute for Basic Science (IBS), Daejeon, South Korea.
\url{research@meikehatzel.com}}
\and
Karolina Okrasa\thanks{University of Oxford, United Kingdom \& Warsaw University of Technology, Poland. \url{karolina.okrasa@cs.ox.ac.uk}}
\and
Michał Pilipczuk\thanks{Institute of Informatics, University of Warsaw, Poland. \url{michal.pilipczuk@mimuw.edu.pl}}}
\author{\authorthing}
\date{}
\begin{document}

\maketitle

\begin{abstract}
	Let $D$ be a directed graphs with distinguished sets of sources $S\subseteq V(D)$ and sinks $T\subseteq V(D)$.
	A \emph{\tripod} in $D$ is a subgraph consisting of the union of two $S$-$T$-paths that have distinct start-vertices and the same end-vertex, and are disjoint apart from sharing a suffix.

	We prove that \tripods in directed graphs exhibit the \EP property.
	More precisely, there is a function $f\colon \N\to \N$ such that for every digraph $D$ with sources $S$ and sinks $T$, if $D$ does not contain $k$ vertex-disjoint \tripods, then there is a set of at most $f(k)$ vertices that meets all the \tripods in $D$.
\end{abstract}

\begin{textblock}{20}(-2.1,5.4)
   \includegraphics[width=80px]{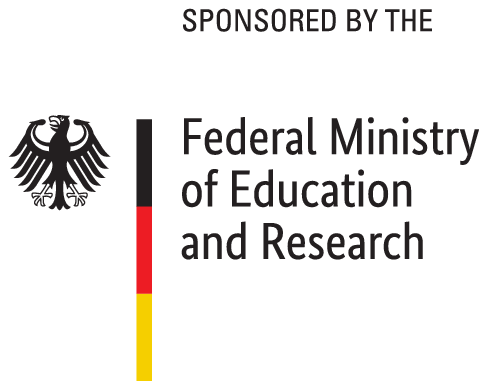}
\end{textblock}
\begin{textblock}{20}(-2.1,6.5)
 \includegraphics[width=60px]{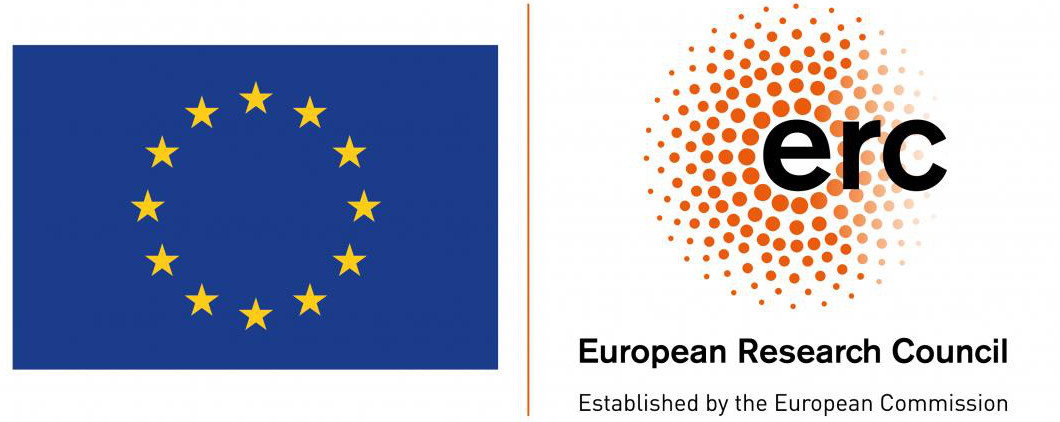}
\end{textblock}

\thispagestyle{empty}

\newpage

\setcounter{page}{1}

\section{Introduction}\label{sec:intro}

As proved by Erdős and Pósa~\cite{EPtheorem1965} already in 1965, every undirected graph contains $k$ vertex-disjoint cycles or a set of $\Oh(k\log k)$ vertices that meets all the cycles. In modern terminology, this means that cycles in undirected graphs have the \emph{\EP property}: there is a functional relation between their hitting number and their packing number. Since the work of Erdős and Pósa, the search for families of objects in graphs exhibiting the \EP property has gathered significant interest; see the dynamic survey by Raymond~\cite{EPwebpageRaymond} for an extensive selection of both positive and negative results.

Arguably, questions about \EP properties become much more difficult in directed graphs (\emph{digraphs}, for short), and far fewer results are known. For instance, directed cycles in digraphs do have the \EP property, but this was established only in 1996 by Reed, Robertson, Seymour, and Thomas~\cite{ReedRST96} after being open for more than 20 years. More generally, Amiri, Kawarabayashi, Kreutzer, and Wollan~\cite{AmiriKKW16} proved that topological minor models of a fixed strongly connected digraph $H$ exhibit the \EP property as long as $H$ itself is a topological minor of a cylindrical wall. To this end, they combined the Directed Grid Theorem of Kawarabayashi and Kreutzer~\cite{KawarabayashiK15} with the classic width-based approach proposed by Robertson and Seymour~\cite{GM5} in their proof that in undirected graphs, minor models of any fixed planar graph $H$ have the \EP property.

The assumption about the strong connectivity of $H$ is used vitally in the proof of Amiri et al..
Intuitively, only under this assumption, a directed tree decomposition provides separation properties that are useful in the context of the \EP property of topological minor models of $H$. For instance, if the pattern graph $H$ were acyclic, then \EP-type question about models of $H$ would be meaningful and non-trivial already in directed acyclic graphs (DAGs), which have directed treewidth $0$. In fact, the authors are not aware of any non-trivial \EP-type results for acyclic patterns in general digraphs, except for the classic Menger's Theorem~\cite{menger1927,directedmenger}, which establishes the \EP property for $S$-$T$-paths in digraphs with prescribed sets of sources $S$ and sinks $T$.

In this work, we deliver one such result.
We study objects we call \emph{\tripods}, which can be regarded as generalisations of $S$-$T$-paths considered in Menger's Theorem. To define them, it is convenient to introduce some definitions.

A \emph{\migration digraph} is a digraph $D$ together with prescribed sets of \emph{sources} $S(D)\subseteq V(D)$ and of \emph{sinks} $T(D)\subseteq V(D)$. If $D$ is clear from the context, we often simply write $S,T$ instead of $S(D),T(D)$.
A \emph{\tripod}, see~\cref{fig:tripod}, in a \migration digraph $D$ is a subgraph of $D$ consisting of:
\begin{itemize}
 \item two distinct sources $s_1,s_2\in S$;
 \item a sink $t\in T$;
 \item a vertex $c\in V$, called the \emph{centre}; and
 \item an $s_1$-$c$-path, an $s_2$-$c$-path, and a $c$-$t$-path, all vertex-disjoint except for sharing $c$.
\end{itemize}
Note that any of the paths in the last point might have length $0$. In particular, an $S$-$T$-path that has an internal vertex belonging to $S$ is also a \tripod.

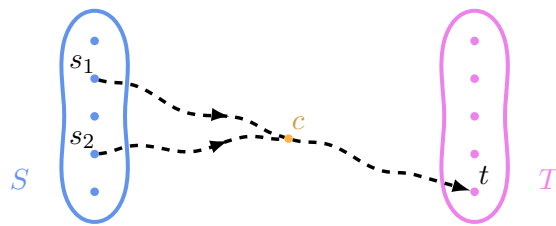
\begin{figure}[!ht]
	\centering
	\begin{tikzpicture}
		\node (centre) at (0,0) {};
		\def\dist{0.5}
		\node[vertex,CornflowerBlue] (s-3) at ($(centre)+(180:2.5)$) {};
		\node[vertex,CornflowerBlue] (s-2) at ($(s-3)+(90:\dist)$) {};
		\node (s-1-label) at ($(s-2)+(130:0.25)$) {$s_1$};
		\node[vertex,CornflowerBlue] (s-1) at ($(s-2)+(90:\dist)$) {};
		\node[vertex,CornflowerBlue] (s-4) at ($(s-3)+(270:\dist)$) {};
		\node (s-2-label) at ($(s-4)+(130:0.25)$) {$s_2$};
		\node[vertex,CornflowerBlue] (s-5) at ($(s-4)+(270:\dist)$) {};
		
		\node[vertex,PastelOrange] (t-centre) at ($(centre)+(280:0.3)$) {};
		\node[PastelOrange!85!black] (centre-label) at ($(t-centre)+(60:0.25)$) {$c$};
		
		\node[vertex,LavenderMagenta] (t-3) at ($(centre)+(0:2.5)$) {};
		\node[vertex,LavenderMagenta] (t-2) at ($(t-3)+(90:\dist)$) {};
		\node[vertex,LavenderMagenta] (t-1) at ($(t-2)+(90:\dist)$) {};
		\node[vertex,LavenderMagenta] (t-4) at ($(t-3)+(270:\dist)$) {};
		\node[vertex,LavenderMagenta] (t-5) at ($(t-4)+(270:\dist)$) {};
		\node (t-label) at ($(t-5)+(60:0.25)$) {$t$};
		
		\draw[pathdecorated] (s-2) to (t-centre);
		\draw[pathdecorated] (s-4) to (t-centre);
		\draw[path] (t-centre) to (t-5);
		
		\begin{pgfonlayer}{background}
			\def\setdist{0.4}
			\draw[line width=1.5pt,CornflowerBlue] ($(s-1)+(180:\setdist)$) to[closed,curve through ={($(s-1)+(90:\setdist)$) ($(s-1)+(0:\setdist)$) ($(s-1)!0.5!(s-5)+(0:\setdist)$) ($(s-5)+(0:\setdist)$) ($(s-5)+(270:\setdist)$) ($(s-5)+(180:\setdist)$)}] ($(s-5)!0.5!(s-1)+(180:\setdist)$);
			
			\node[CornflowerBlue] (S-label) at ($(s-5)+(170:1)$) {$S$};
			
			\draw[line width=1.5pt,LavenderMagenta] ($(t-1)+(180:\setdist)$) to[closed,curve through ={($(t-1)+(90:\setdist)$) ($(t-1)+(0:\setdist)$) ($(t-1)!0.5!(t-5)+(0:\setdist)$) ($(t-5)+(0:\setdist)$) ($(t-5)+(270:\setdist)$) ($(t-5)+(180:\setdist)$)}] ($(t-5)!0.5!(t-1)+(180:\setdist)$);
			
			\node[LavenderMagenta] (T-label) at ($(t-5)+(10:1)$) {$T$};
		\end{pgfonlayer}
	\end{tikzpicture}
	\caption{A \tripod with its \textcolor{PastelOrange!85!black}{centre $c$}, the two sources $s_1$ and $s_2$ and its sink $t$.}
	\label{fig:tripod}
\end{figure}


With these definitions, our main result reads as follows.

\begin{theorem}\label{thm:main}
 There is a function $\bound{thm:main}{f}{}\colon \N\to \N$ such that for every \migration digraph $D$ and $k\in \N$~either
 \begin{itemize}
  \item there is a family of $k$ vertex-disjoint \tripods in $D$, or
  \item there is a set of at most $\bound{thm:main}{f}{k}$ vertices that intersects every \tripod in $D$.
 \end{itemize}
\end{theorem}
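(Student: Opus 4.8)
The plan is to reformulate when a \tripod exists, peel off two easy structural cases with Menger's theorem, and then run an induction on $k$ whose only genuine difficulty is that a \tripod need not have bounded size. For the reformulation I would prove that $D$ contains a \tripod if and only if $D$ contains two $S$-$T$-paths with distinct start-vertices that share a vertex. One direction is immediate: a \tripod with centre $c$, source-paths $P_1,P_2$ and sink-path $Q$ yields the two $S$-$T$-paths $P_1\cup Q$ and $P_2\cup Q$, which share $c$. For the converse, if $R_1$ (starting at $s_1$) and $R_2$ (starting at $s_2$, ending at $t_2$) are $S$-$T$-paths with $s_1\ne s_2$ and $V(R_1)\cap V(R_2)\ne\emptyset$, let $c$ be the first vertex of $R_1$ lying on $R_2$; then $R_1[s_1,c]$, $R_2[s_2,c]$ and $R_2[c,t_2]$ pairwise meet only in $c$ and form a \tripod centred at $c$ (length-$0$ pieces being allowed, as in the remark after the definition). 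Two consequences I will use repeatedly: (a) every \tripod contains an $S$-$T$-path, so any $S$-$T$-separator of $D$ meets every \tripod; and (b) vertex-disjoint \tripods have disjoint pairs of sources and pairwise distinct centres.

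Fix $k$ and suppose $D$ has no $k$ vertex-disjoint \tripods. If $D$ has fewer than $N_0(k)$ vertex-disjoint $S$-$T$-paths, then Menger's theorem yields an $S$-$T$-separator of size $<N_0(k)$, which by (a) meets all \tripods, and we are done; so assume $D$ has at least $N_0(k)$ vertex-disjoint $S$-$T$-paths. Likewise, let $C$ be the set of vertices occurring as the centre of some \tripod; then $C$ meets every \tripod, so if $|C|<N_1(k)$ we are done, and we may assume $|C|$ is large. A third, weaker, piece of leverage: if the graph $G$ on vertex set $S$ with an edge $ss'$ for each pair $\{s,s'\}$ that is the source-pair of some \tripod has a vertex cover of size $<N_2(k)$, then by (b) that cover meets all \tripods; so we may assume $G$ has a matching of size $\ge N_2(k)/2$ — although, as witnessed by a graph consisting of many $S$-paths that all merge at a single centre and then run to one sink, a large such matching does not by itself produce many disjoint \tripods.

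Now I would induct on $k$, with $k\le 1$ trivial. The natural step is: pick a \tripod $T_1$; if $D-V(T_1)$ contains $k-1$ vertex-disjoint \tripods we win, and otherwise the inductive hypothesis gives a set $X_1$ of size at most $f(k-1)$ meeting all \tripods of $D-V(T_1)$, so that it remains only to meet, with boundedly many further vertices, every \tripod of $D$ that intersects $V(T_1)$. This is the crux, because a \tripod is unbounded: $V(T_1)$, and in particular each of its three constituent paths $P_1^1$, $P_1^2$, $Q_1$, may be a long induced path, so we cannot afford to add $V(T_1)$ to the hitting set. The substantive content of the proof is a lemma saying that the \tripods ``clinging to'' a single path are tame — given a path $Q_1=q_0q_1\cdots q_\ell$ of $T_1$, classify the \tripods meeting $Q_1$ by the extreme positions at which they touch $Q_1$, and show, using that there is no $k$-packing, that the $S$-$T$-flow and $|C|$ are large, and that the pattern of ``attachments'' onto $Q_1$ is itself amenable to Menger's theorem, that either a fresh \tripod can be routed vertex-disjointly from $T_1$ and from a current partial packing (contradicting the packing assumption), or boundedly many vertices (a few vertices of $Q_1$, together with a bounded separator of its attachment structure) meet all of them. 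Applying this to $P_1^1$, $P_1^2$ and $Q_1$ in turn and combining with $X_1$ yields the required hitting set, with $f(k)$ exceeding $f(k-1)$ by an amount depending only on $k$.

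The one genuinely hard point is this last lemma: since \tripods are unbounded the induction cannot delete a \tripod, and one must control the \tripods attached along the long paths of a single \tripod. This is where it matters that the forbidden pattern is specifically a \tripod — two $S$-$T$-paths sharing a suffix — because for general acyclic patterns no such \EP statement holds, so it is exactly this structure that lets repeated applications of Menger's theorem along a path close the argument; everything preceding it is routine.
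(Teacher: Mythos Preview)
Your proposal correctly identifies its own weak point and then does not resolve it. Everything up to and including the inductive scheme --- pick a \tripod $T_1$, apply induction to $D-V(T_1)$, and reduce to hitting the \tripods that meet $V(T_1)$ --- is sound but routine. The entire difficulty is your ``tripods clinging to a path'' lemma: given a path $Q$ and the assumption $\packing(D)<k$, boundedly many vertices meet every \tripod intersecting $Q$. You do not prove this; the phrases ``the pattern of attachments onto $Q_1$ is itself amenable to Menger's theorem'' and ``repeated applications of Menger's theorem along a path close the argument'' are not an argument but a hope. Note that this lemma is essentially equivalent in strength to the theorem itself (each implies the other modulo the easy induction), so asserting it without proof is asserting the theorem. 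None of the preliminary observations you set up --- large $S$-$T$-flow, large set $C$ of centres, large matching on source-pairs --- are used in any concrete way to establish it, and it is not clear what Menger application you have in mind: which two sets are being separated, and why is the separator bounded in terms of $k$ alone?

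The paper does not go through any ``attachments along a path'' analysis. Its engine is a statement of a different shape (\cref{thm:crossing_linkages}): two large $S$-$T$-linkages with the same end-vertices but disjoint start-vertices force $k$ disjoint \tripods; this is proved via Ramsey arguments together with the Onion Harvesting Lemma of Bo\.zyk et al. From this, combined with the Matroid Intersection Theorem applied to the two gammoids of sets linkable from $S_1$ and from $S_2$, one gets (\cref{lem:cut_partition}): for any partition $(S_1,S_2)$ of $S$ there is a partition $(T_1,T_2)$ of $T$ and a set of $\Oh_k(1)$ vertices meeting all $S_1$-$T_2$- and $S_2$-$T_1$-paths. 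This ``small separation lemma'' then drives a divide-and-conquer induction over partitions of $S$, not over deleting a single \tripod. So the substantive tools --- onions, Ramsey, matroid intersection --- are absent from your sketch, and your central lemma remains a black box.
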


Our strategy to prove \cref{thm:main} can be summarised as follows. In the following description, we fix a \migration digraph $D$ and $k\in \N$, and we use $\Omega_k(1)$ and $\Oh_k(1)$ as short-hands for a lower bound, respectively an upper bound, that is phrased as a function of $k$. For undefined terms, see \cref{sec:preliminaries}.
\begin{itemize}
 \item We use a recent result of Bożyk, Defrain, Okrasa, and Pilipczuk~\cite{bozyk2022digraphs}, called the \emph{Onion Harvesting Lemma}, to prove the following statement (\cref{lem:web_to_packing}): if in $D$ there are linkages $\Kk$ and $\Ll$ with $|\Kk|=|\Ll|=\Omega_k(1)$ such that the paths of $\Kk$ all start in $S$, the paths of $\Ll$ all end in~$T$, and every path of $\Kk$ intersects every path of $\Ll$, then there are $k$ vertex-disjoint \tripods in~$D$.
 \item We use the statement above to prove the following (\cref{thm:crossing_linkages}): if there are two $S$-$T$-linkages $\Pp$ and $\Qq$ with $|\Pp|=|\Qq|=\Omega_k(1)$ such that $\Pp$ and $\Qq$ have disjoint sets of start-vertices but the same set of end-vertices, then there are $k$ vertex-disjoint \tripods in $D$. We give two proofs: one using Ramsey arguments, and one exploiting an intermediate statement (\cref{lem:tripod_or_web}) that in order to find $k$ vertex-disjoint \tripods, it is enough to exhibit an $S$-$T$-linkage $\Pp$ of size $\Omega_k(1)$ and a path $Q$ that intersects every path of $\Pp$.
 \item Next, we prove (\cref{lem:cut_partition}) that unless there are $k$ vertex-disjoint \tripods in $D$, we have the following: for every partition $(S_1,S_2)$ of $S$, there is a partition $(T_1,T_2)$ of $T$ and a set of $\Oh_k(1)$ vertices that intersects all $S_1$-$T_2$-paths and all $S_2$-$T_1$-paths. This follows from combining the statement from the previous point with the Matroid Intersection Theorem of Edmonds~\cite{edmonds1970}.
 \item We finalise the proof of \cref{thm:main} by employing a variant of the classic separation-based framework for proving \EP-type results (see e.g.~\cite{bruce1999mangoes}), where the statement from the previous point serves the role of a ``small separation lemma''.
\end{itemize}

We remark that the appearance of the result of Bożyk et al.~\cite{bozyk2022digraphs} in this work is not a coincidence.
As the reader can see in the proof of \cref{lem:web_to_packing}, \tripods are just thinly disguised \emph{onions}: structures studied by Bożyk et al.~in the context of directed immersions.
In fact, \cref{thm:main} was devised as a technical statement within a larger project to prove a suitable grid theorem for directed immersions, which was initiated by Bożyk et al..
Thus, this work can be regarded as a follow-up of theirs, and we hope that \cref{thm:main} may be useful in a successful completion of said project.

The result we present in this paper considers vertex-disjoint \tripods, however one can directly infer a similar result on edge-disjoint \tripods as well.
\begin{corollary}\label{thm:main_edge}
	Let $k\in \N$ and $D$ be a \migration digraph $D$ in which every source $s\in S(D)$ has no incoming edges and one outgoing edge and every sink $t\in T(D)$ has one incoming edge and no outgoing edges. Then~either
	\begin{itemize}
		\item there is a family of $k$ edge-disjoint \tripods in $D$, or
		\item there is a set of at most $\bound{thm:main}{f}{k}$ edges that intersects every \tripod in $D$.
	\end{itemize}
\end{corollary}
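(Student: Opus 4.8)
The plan is to reduce \cref{thm:main_edge} to \cref{thm:main} via the standard line-graph-style gadget that turns the edge-disjointness problem into a vertex-disjointness problem. Concretely, given a \migration digraph $D$ as in the statement, I would build a \migration digraph $D'$ by subdividing every edge of $D$ exactly once; that is, for each edge $e=(u,v)\in E(D)$ introduce a new vertex $m_e$ and replace $e$ by the two edges $(u,m_e)$ and $(m_e,v)$. The source set $S(D')$ and sink set $T(D')$ are taken to be the images of $S(D)$ and $T(D)$. The point of the hypothesis that every source has exactly one outgoing edge and no incoming edge (and dually for sinks) is that it pins down a clean correspondence: in $D'$ every source still has out-degree one and in-degree zero, so a \tripod in $D'$ behaves exactly like a \tripod in $D$ routed through edges.

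Next I would establish the two directions of the correspondence. First, vertex-disjoint \tripods in $D'$ correspond to edge-disjoint \tripods in $D$: each \tripod $R'$ in $D'$ projects (by contracting the subdivision vertices back) to a \tripod $R$ in $D$, and if two \tripods in $D'$ share no vertex then in particular they share no subdivision vertex $m_e$, hence their projections share no edge of $D$; so $k$ vertex-disjoint \tripods in $D'$ yield $k$ edge-disjoint \tripods in $D$. Conversely, given $k$ edge-disjoint \tripods in $D$, lifting each to $D'$ by routing through the corresponding subdivision vertices produces $k$ \tripods in $D'$ that are pairwise disjoint on the subdivision vertices; one still has to argue they can be made vertex-disjoint, which is where the degree hypotheses and a little care enter (for instance, two edge-disjoint \tripods could a priori share an original vertex of $D$). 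I would handle this by working instead with the ``arc-incidence'' digraph: let $V(D')=E(D)$, and put an arc from $e$ to $e'$ whenever the head of $e$ equals the tail of $e'$; set $S(D')=\{e : \text{tail}(e)\in S(D)\}$ and $T(D')=\{e:\text{head}(e)\in T(D)\}$. Under the degree assumptions on sources and sinks, paths in $D'$ are in bijection with paths in $D$ that are non-trivial, vertex-disjointness in $D'$ corresponds exactly to edge-disjointness in $D$, and the \tripod structure is preserved in both directions; this bijection is clean precisely because each source/sink of $D$ contributes exactly one vertex of $D'$.

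With the bijection in hand, the proof finishes quickly. If $D'$ contains $k$ vertex-disjoint \tripods, then $D$ contains $k$ edge-disjoint \tripods. Otherwise, \cref{thm:main} applied to $D'$ gives a set $X'\subseteq V(D')$ with $|X'|\le \bound{thm:main}{f}{k}$ meeting every \tripod of $D'$. Translating $X'$ back through the bijection yields a set $X\subseteq E(D)$ with $|X|\le \bound{thm:main}{f}{k}$ that meets every \tripod of $D$, because every edge-disjoint-\tripod of $D$ is the image of a \tripod of $D'$ and hence passes through some element of $X'$. This gives exactly the claimed dichotomy with the same bound function $\bound{thm:main}{f}{}$.

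I expect the main obstacle to be the bookkeeping in setting up the bijection so that \emph{both} disjointness notions and the \tripod shape (two in-paths and one out-path meeting at a single centre, possibly with zero-length legs) transfer cleanly in both directions; in particular one must check that an edge-disjoint \tripod in $D$ whose centre is an original vertex of high degree still maps to a bona fide \tripod in $D'$ (the centre becomes one of the incident edges), and that the special cases where some leg has length zero do not break the correspondence. The degree hypotheses on $S(D)$ and $T(D)$ are exactly what is needed to avoid having to make arbitrary choices in this translation at the source and sink ends, so I would state and use them explicitly at each step. Everything else is a direct invocation of \cref{thm:main}.
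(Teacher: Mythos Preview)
Your proposal is correct and lands on essentially the same argument as the paper: the ``arc-incidence'' digraph you describe is precisely the line graph $D'$ the paper uses, with the same choice of sources and sinks, and the correspondence between edge-disjoint tripods in $D$ and vertex-disjoint tripods in $D'$ is exactly what the paper invokes before applying \cref{thm:main}. Your initial subdivision idea is a detour you rightly abandon (it does not give vertex-disjointness on the original vertices); you can drop it and go straight to the line graph construction.
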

To see that \cref{thm:main_edge} follows directly from \cref{thm:main}, we take the given graph $D$ construct its \emph{linegraph} $D'$: the digraph on the vertex set $\E{D}$ and with edges of the form $(e,f)$ where the head of $e$ is equal to the tail of $f$. We treat $D'$ as a \migration digraph by setting the edges outgoing from the sources of $D$ to be sources, and the edges incoming to the sinks of $D$ to be sinks. It is then straightforward to see that tripods in $D$ correspond to tripods in $D'$, where the correspondence translates edge-disjointness to vertex-disjointness. Hence it suffices to apply \cref{thm:main} in $D'$.



\section{Preliminaries}
\label{sec:preliminaries}

For $k \in \N$ we write $[k]$ for the set of integers $\Set{1,\dots,k}$. A \emph{partition} of a set $A$ is a pair $(A_1,A_2)$ of subsets of $A$ such that $A_1\cap A_2=\emptyset$ and $A_1\cup A_2=A$. Note that we allow $A_1$ or $A_2$ to be~empty. In an undirected graph, an \emph{independent set} and a \emph{clique} are sets of pairwise non-adjacent, respectively pairwise adjacent, vertices.

A \emph{digraph} $D$ consists of a set of vertices $\V{D}$ and a set of edges $\E{D} \subseteq \V{D} \times \V{D}$.
For an edge $e = (x,y) \in \E{D}$ we call $\Head{e} \coloneqq y$ the \emph{head} of $e$ and $\Tail{e} \coloneqq x$ the \emph{tail} of $e$.
Thus, the digraphs in this paper are \emph{simple} --- do not contain multiple edges with same head and tail --- and we also assume them not to contain loops.

A \emph{subgraph} $D'$ of a digraph $D$ is a digraph with $\V{D'} \subseteq \V{D}$ and $\E{D'} \subseteq \E{D}.$

A (directed) path $P$ of length $\ell$ in $D$ is a sequence of distinct vertices $(v_0, v_1, \dots, v_{\ell})$ such that $\Brace{v_i, v_{i+1}} \in \E{D}$ for all $i \in [\ell]$.
We call $\Start{P} \coloneqq v_0$ the \emph{start-vertex} of $P$ and $\End{P} \coloneqq v_{\ell}$ the \emph{end-vertex} of $P$.
We say $P$ is a $\Start{P}$-$\End{P}$-path.
For sets $A,B \subseteq \V{D}$ we say $P$ is an $A$-$B$-path if $\Start{P} \in A$ and $\End{P} \in B$.
We sometimes identify a path $P$ with the subgraph consisting of its vertices and the connecting edges.

A \emph{linkage} in a digraph $D$ is a family of vertex-disjoint paths, and it is an \emph{$A$-$B$-linkage} if it consists of $A$-$B$-paths.
For a linkage $\mathcal{L}$, we write \[\Start{\mathcal{L}} \coloneqq \{\Start{P}\colon P\in \mathcal{L}\}\qquad\textrm{and}\qquad \End{\mathcal{L}} \coloneqq \{\End{P}\colon P\in \mathcal{L}\}.\]
We also use this notation for collection of paths that are not necessarily disjoint. Two linkages $\Pp$ and~$\Qq$ \emph{pairwise intersect} if every path of $\Pp$ intersects every path of $\Qq$.

\Migration digraphs and
\tripods were already defined in \cref{sec:intro}.
For a migration digraph $D$, a \emph{packing of \tripods} in $D$ is a family of vertex disjoint \tripods in $D$. By $\packing(D)$ we denote the largest size of a packing of \tripods in $D$.
Also, we say that a subset of vertices $F$ of a digraph $D$ \emph{meets} or \emph{intersects} a subgraph $D'$ of $D$ if $D'$ contains a vertex that belongs to $F$.
This terminology is applied to paths and to \tripods in particular.

Finally, we make use of the well-known result by Menger~\cite{menger1927,directedmenger} that the size of the minimum set of vertices separating two sets is equal to the maximum size of a linkage between them.

\begin{theorem}[Menger's Theorem~\cite{menger1927,directedmenger}]
	\label{thm:menger}
	Let $D$ be a digraph, $A$ and $B$ be subsets of vertices of $D$, and $k \in \N.$
	Then one of the following holds:
	\begin{itemize}
		\item there is a vertex subset of size at most $k-1$ that meets every $A$-$B$-path, or
		\item there is an $A$-$B$-linkage of order $k$.
	\end{itemize}
\end{theorem}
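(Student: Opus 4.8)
The plan is to derive the statement from the Max-Flow Min-Cut Theorem via the classical vertex-splitting reduction. From $D$, $A$, and $B$ I would build a capacitated network $N$: split each vertex $v \in \V{D}$ into $v^-$ and $v^+$ joined by an arc $v^- \to v^+$ of capacity $1$; turn each edge $(u,v) \in \E{D}$ into an arc $u^+ \to v^-$ of capacity $+\infty$; add a super-source $s$ with an arc $s \to a^-$ of capacity $+\infty$ for every $a \in A$, and a super-sink $t$ with an arc $b^+ \to t$ of capacity $+\infty$ for every $b \in B$. The point of the construction is that $s$-$t$ paths in $N$ are in bijection with $A$-$B$-paths in $D$ --- a length-$0$ path at a vertex $v \in A \cap B$ becoming $s \to v^- \to v^+ \to t$ --- and that the only way such a path can pass through a vertex $v$ of $D$ is by using the capacity-$1$ arc $v^- \to v^+$.

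Two correspondences then finish the argument. First, an integral $s$-$t$-flow of value $\ell$ decomposes into $\ell$ arc-disjoint $s$-$t$-paths; as each arc $v^- \to v^+$ has capacity $1$, no vertex of $D$ lies on two of the associated $A$-$B$-paths, so these form an $A$-$B$-linkage of order $\ell$, and conversely an $A$-$B$-linkage of order $\ell$ lifts to a feasible integral flow of value $\ell$. Second, a minimum $s$-$t$-cut contains no infinite-capacity arc (cutting all the arcs $v^- \to v^+$ is already a finite cut), so it consists solely of arcs $v^- \to v^+$; the set $C$ of the corresponding vertices of $D$ meets every $A$-$B$-path and has size equal to the cut's capacity, while conversely a set of $m$ vertices meeting every $A$-$B$-path yields an $s$-$t$-cut of capacity $m$. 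By Max-Flow Min-Cut together with the integrality of maximum flows for integer capacities, the maximum order of an $A$-$B$-linkage equals the minimum size of a vertex set meeting every $A$-$B$-path; the stated dichotomy is immediate, since if no set of size at most $k-1$ meets every $A$-$B$-path then this common value is at least $k$, and any $k$ paths of a linkage realising it are as required.

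If one prefers to avoid network flows, the same statement admits a direct proof by induction on $\lvert\E{D}\rvert$: when $D$ is edgeless the $A$-$B$-paths are exactly the vertices of $A \cap B$ and the claim is immediate; otherwise one chooses an edge $e=(x,y)$, and either $D-e$ has no set of size at most $k-1$ meeting all its $A$-$B$-paths (whence we are done by induction, as a linkage in $D-e$ is one in $D$), or a minimum such set $Y$ in $D-e$ has size exactly $k-1$ and both $Y \cup \{x\}$ and $Y \cup \{y\}$ are separators of $D$, which one uses to cut $D$ into two strictly smaller instances whose linkages can be spliced across $e$. In either approach the only real work is routine bookkeeping --- matching flow-paths with vertex-disjoint path families, matching finite cuts with vertex sets meeting all $A$-$B$-paths while correctly handling vertices of $A \cap B$, and, for the inductive variant, verifying that the sub-instances strictly shrink and that the spliced paths remain disjoint. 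No genuine obstacle arises: this is simply the classical vertex form of Menger's Theorem.
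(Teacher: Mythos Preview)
The paper does not prove this statement at all: Menger's Theorem is quoted in the preliminaries with a citation to the original sources and is used as a black box throughout. Your sketch via Max-Flow Min-Cut with vertex splitting is a standard and correct derivation (and the inductive alternative you outline is equally classical), so there is nothing to compare --- you have simply supplied a proof where the paper, appropriately, just gives a reference.
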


\section{Packing \tripods}
\label{sec:crossed}
\label{sec:packing}

In this~\namecref{sec:packing} we prove the following statement.

\begin{theorem}
	\label{thm:crossing_linkages}
	There is a function $\bound{thm:crossing_linkages}{g}{}\colon \N \to \N$ with the following property.
	Let $D$ be a \migration digraph, $k$ be a positive integer,
  and $\Pp$ and $\Qq$ be two $S$-$T$-linkages of size at least $\bound{thm:crossing_linkages}{g}{k}$ such that $\End{\Pp} = \End{\Qq}$ and $\Start{\Pp} \cap \Start{\Qq} = \emptyset$.
	Then $D$ contains a packing of $k$ \tripods.
\end{theorem}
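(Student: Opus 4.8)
The plan is to derive \cref{thm:crossing_linkages} from the web-to-packing lemma (\cref{lem:web_to_packing}): it suffices to extract from $\Pp$ and $\Qq$ two linkages $\Kk$ and $\Ll$, each of size $\Omega_k(1)$, such that every path of $\Kk$ starts in $S$, every path of $\Ll$ ends in $T$, and $\Kk$ and $\Ll$ pairwise intersect --- or, failing that, to produce $k$ vertex-disjoint \tripods along the way. Since $\End{\Pp}=\End{\Qq}=:T_0$ and $\Pp,\Qq$ are linkages, $|T_0|=|\Pp|\ge\bound{thm:crossing_linkages}{g}{k}$, and for each $t\in T_0$ there are unique $P_t\in\Pp$ and $Q_t\in\Qq$ with $\End{P_t}=\End{Q_t}=t$. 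Set $a_t\coloneqq\Start{P_t}$ and $b_t\coloneqq\Start{Q_t}$; these are distinct sources because $\Start{\Pp}\cap\Start{\Qq}=\emptyset$.

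The first step is a short piece of path surgery showing that $P_t\cup Q_t$ already hosts a \tripod. Let $c_t$ be the first vertex along $Q_t$, starting from $b_t$, that lies on $P_t$; it exists because $t$ does. By minimality, $Q_t[b_t,c_t]$ meets $P_t$ only in $c_t$, so $P_t[a_t,c_t]$, $Q_t[b_t,c_t]$ and $P_t[c_t,t]$ are pairwise vertex-disjoint apart from sharing $c_t$; together with the distinct sources $a_t,b_t$ and the sink $t$ they form a \tripod $\mathsf{Tr}_t$ (recall that any of the three paths may have length $0$). The feature to retain is the confinement $\V{\mathsf{Tr}_t}\subseteq\V{P_t}\cup\V{Q_t}$.

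Next I would build a conflict graph $H$ on vertex set $T_0$, making $t$ adjacent to $t'$ exactly when $\mathsf{Tr}_t$ and $\mathsf{Tr}_{t'}$ share a vertex. If $H$ has an independent set of size $k$, the corresponding \tripods are vertex-disjoint and we are done; so assume otherwise. By Ramsey's theorem, if $\bound{thm:crossing_linkages}{g}{k}$ is chosen large enough (in terms of $k$ and a parameter $m$ fixed below), $H$ contains a clique on a set $K$ with $|K|=m$. Since $\Pp$ and $\Qq$ are each vertex-disjoint, for distinct $t,t'\in K$ we have $\V{\mathsf{Tr}_t}\cap\V{\mathsf{Tr}_{t'}}\subseteq(\V{P_t}\cap\V{Q_{t'}})\cup(\V{P_{t'}}\cap\V{Q_t})$, and as $tt'\in\E{H}$ at least one of $P_t\cap Q_{t'}$, $P_{t'}\cap Q_t$ is nonempty. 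Orienting $t\to t'$ whenever $P_t$ meets $Q_{t'}$ (breaking ties arbitrarily) turns $K$ into a tournament, which contains a transitive sub-tournament $t_1\to t_2\to\cdots\to t_r$ with $r\ge\log_2 m$; thus $P_{t_i}$ meets $Q_{t_j}$ for all $i<j$. Taking $\Kk\coloneqq\{P_{t_i}: i\le\lfloor r/2\rfloor\}\subseteq\Pp$ and $\Ll\coloneqq\{Q_{t_j}: j>\lfloor r/2\rfloor\}\subseteq\Qq$, these are linkages of size $\ge\lfloor r/2\rfloor$, the paths of $\Kk$ start in $S$, the paths of $\Ll$ end in $T$, and every path of $\Kk$ meets every path of $\Ll$ by transitivity. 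Choosing $m$ large enough that $\lfloor(\log_2 m)/2\rfloor$ exceeds the threshold of \cref{lem:web_to_packing} and applying that lemma yields $k$ vertex-disjoint \tripods.

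I expect the routine parts to be the path surgery of the second step and the Ramsey/tournament bookkeeping of the third; the conceptual crux is the confinement observation $\V{\mathsf{Tr}_t}\subseteq\V{P_t}\cup\V{Q_t}$, which converts ``two \tripods clash'' into ``a $\Pp$-path crosses a $\Qq$-path'' --- exactly the crossing pattern \cref{lem:web_to_packing} is built to exploit, and also the reason the hypotheses $\End{\Pp}=\End{\Qq}$ and $\Start{\Pp}\cap\Start{\Qq}=\emptyset$ are needed (the former to make $P_t,Q_t$ meet at $t$, the latter to make $\mathsf{Tr}_t$ a genuine \tripod). A Ramsey-free variant with better quantitative bounds would instead feed the two linkages to \cref{lem:tripod_or_web}, but the route above is the most direct.
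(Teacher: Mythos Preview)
Your proposal is correct and follows essentially the paper's Ramsey-based proof: the paper defines an auxiliary digraph $\vec H$ on $Y=\End{\Pp}$ with an edge $(y,z)$ whenever $P^y$ meets $Q^z$, applies Ramsey to the underlying graph to get either an independent set of size $k$ (yielding $k$ disjoint tripods inside the unions $P^y\cup Q^y$) or a large semi-complete piece, and then extracts a transitive sub-tournament whose two halves give the pairwise-intersecting linkages fed to \cref{lem:web_to_packing}. The only difference is cosmetic --- you build the tripods $\mathsf{Tr}_t$ explicitly first and define the conflict graph on them, whereas the paper works directly with the $P$--$Q$ intersection relation; your confinement observation $\V{\mathsf{Tr}_t}\subseteq\V{P_t}\cup\V{Q_t}$ is exactly what makes the two formulations equivalent.
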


We give two ways to prove this result.
The first proof, presented in \cref{sec:ramsey}, uses Ramsey arguments and has the advantage of being concise and simple. The second proof, presented in \cref{sec:web_or_tripod}, contains an intermediate statement (\cref{thm:path_crossing_linkage}) of independent interest.
Both proofs make use of a result of Bożyk et al.~for directed immersions~\cite{bozyk2022digraphs}, which we translate into a statement about \tripods here.

\subsection{Onions}
\label{sec:onions}

To state the result from the work of Bożyk et al.~\cite{bozyk2022digraphs} we need a few definitions.
An \emph{onion}\footnote{As the original result speaks about immersions, we slightly adjust the definitions in order to be able to talk about subgraphs, by discussing paths instead of images of edges under an immersion embedding.} is a digraph $Z$ which consists of two distinct vertices $x$ and $y$, and three edge-disjoint paths, two of them from $x$ to $y$ and one of them from $y$ to $x$.
We call $x$ the \emph{root} and $y$ the \emph{\tip} of $Z$.
An \emph{onion-star} of order $k$ is a digraph consisting of $2k$ pairwise edge-disjoint onions such that $k$ of them share the same root~$x$ which is also the \tip of the remaining $k$ onions, and all other roots and \tips are distinct. The vertex $x$ is then called the \emph{centre} of the onion-star.

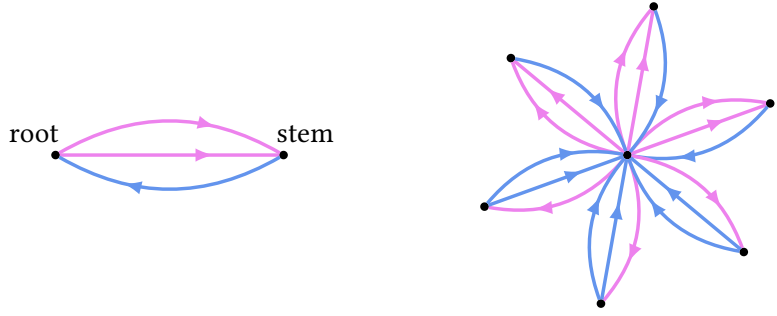
\begin{figure}[!ht]
	\centering
	\begin{tikzpicture}
		\node (onion) at (-3,0) {
			\begin{tikzpicture}
				\node (centre) at (0,0) {};
				\node[vertex] (root) at ($(centre)+(180:1.5)$) {};
				\node[vertex] (stem) at ($(centre)+(0:1.5)$) {};
				\node (stem-label) at ($(stem)+(45:0.4)$) {$\text{stem}$};
				\node (root-label) at ($(root)+(135:0.4)$) {$\text{root}$};

				\draw (root) edge[directededgedecorated,LavenderMagenta] (stem);
				\draw (root) edge[directededgedecorated,LavenderMagenta,bend left] (stem);
				\draw (stem) edge[directededgedecorated,CornflowerBlue,bend left] (root);
			\end{tikzpicture}};
		\node (onion-star) at (3,0) {\begin{tikzpicture}
				\node[vertex] (centre) at (0,0) {};

				\foreach \i in {0,...,5}
				{
					\node[vertex] (s-\i) at ($(centre)+({360/6*\i+20}:2)$) {};
				}
				\foreach \i in {0,1,2}
				{
					\draw (centre) edge[directededgedecorated,LavenderMagenta] (s-\i);
					\draw (centre) edge[directededgedecorated,LavenderMagenta,bend left] (s-\i);
					\draw (s-\i) edge[directededgedecorated,CornflowerBlue,bend left] (centre);
				}
				\foreach \i in {3,4,5}
				{
					\draw (centre) edge[directededgedecorated,LavenderMagenta,bend left] (s-\i);
					\draw (s-\i) edge[directededgedecorated,CornflowerBlue,bend left] (centre);
					\draw (s-\i) edge[directededgedecorated,CornflowerBlue] (centre);
				}
			\end{tikzpicture}};
	\end{tikzpicture}
	\caption{On the left an \emph{onion} and to the right an \emph{onion-star} of order $3$.}
	\label{fig:onion}
\end{figure}

\begin{lemma}[Bożyk, Defrain, Okrasa, Pilipczuk \cite{bozyk2022digraphs}]
	\label{lem:onion_harvesting}
	There exists a function $\bound{lem:onion_harvesting}{g}{} \colon \N \to \N$ such that the following holds.
	Let $D$ be a digraph, $x$ be a vertex of $D$, $t$ be a positive integer,
  and let $\Pp$ and~$\Qq$ be two families of paths in $D$, each of size at least $\bound{lem:onion_harvesting}{g}{t}$, satisfying the following:
	\begin{itemize}
	 \item $\Start{P} = \End{Q} = x$ for all $P \in \Pp$ and $Q\in \Qq$,
	 \item $E(P) \cap E(Q) \neq \emptyset$ for all $P \in \Pp$ and $Q \in \Qq$, and
	 \item the paths of $\Pp$ are pairwise edge-disjoint and the paths of $\Qq$ are pairwise edge-disjoint.
	\end{itemize}
 Then $D$ contains an onion-star of order $t$ with centre $x$.
\end{lemma}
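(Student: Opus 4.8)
The plan is to show that the hypotheses force $x$ to lie on many edge-disjoint cycles whose mutual intersection pattern is controlled, and then to read off the onion-star by pairing up these cycles. To this end, for each $P\in\Pp$ and $Q\in\Qq$ let $e_{P,Q}=(a_{P,Q},b_{P,Q})$ be the last edge of $Q$ --- in the order of $Q$ towards $x$ --- that also lies on $P$; this is well defined since $E(P)\cap E(Q)\neq\emptyset$, and by the choice the suffix of $Q$ from $b_{P,Q}$ to $x$ is edge-disjoint from $P$. Hence $P[x,b_{P,Q}]$ followed by that suffix is a closed trail based at $x$ in which $x$ occurs exactly once; its edges form a subgraph of $D$ in which every vertex has equal in- and out-degree, so it decomposes into edge-disjoint cycles, exactly one of which passes through $x$, and that cycle is forced to use the first edge of $P$ and the last edge of $Q$. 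Write $C_{P,Q}$ for it. Since the paths of $\Pp$ leave $x$ along pairwise distinct edges and those of $\Qq$ enter $x$ along pairwise distinct edges, the cycles picked along any matching between $\Pp$ and $\Qq$ at least use pairwise distinct out-edges and in-edges at $x$.

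The point of producing cycles is that two edge-disjoint cycles $C,C'$ through $x$ that additionally share a vertex $y\neq x$ yield an onion either way: the arcs $C[x,y]$, $C'[x,y]$, $C[y,x]$ form an onion with root $x$ and tip $y$, while $C[y,x]$, $C'[y,x]$, $C[x,y]$ form an onion with tip $x$ and root $y$. So it suffices to produce $\Omega_t(1)$ pairwise edge-disjoint cycles through $x$ that can be grouped into pairs, each pair sharing a vertex other than $x$, with all the witnessing vertices distinct; using $t$ of these pairs for root-$x$ onions and $t$ for tip-$x$ onions then gives an onion-star of order $t$ centred at $x$.

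The crux --- and the step I expect to be the main obstacle --- is to upgrade the family $\{C_{P,Q}\}$ into such a system: a priori these cycles are edge-disjoint only ``near $x$'', since a path of $\Qq$ may share edges with foreign paths of $\Pp$, and nothing controls whether two of them meet away from $x$. I would attack this with a sequence of cleaning steps. Repeatedly rerouting each returning portion so that it starts past its \emph{last} crossing with $\bigcup\Pp$ (and dually each outgoing portion past its last crossing with $\bigcup\Qq$) forces the ``way back'' to $x$ to avoid the ``way out'' part of the web; and an Erdős--Szekeres/sunflower-type extraction applied to the linear orders in which crossing points appear along the various paths --- passing to a sub-web on which these orders are compatible --- makes the surviving returning portions pairwise non-crossing except in a predictable fashion, so that the corresponding cycles become genuinely edge-disjoint and their common vertices can be read off. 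Each such step shrinks the web, so $\bound{lem:onion_harvesting}{g}{t}$ ends up being a sizeable Ramsey-type function of $t$, which the hypotheses are quantitatively strong enough to afford. (Alternatively, one can route through the directed-immersion machinery of Bożyk et al.: a web of size $\Omega_t(1)$ at $x$ contains a large ``cylindrical'' sub-web, from which an onion-star of order $t$ reads off immediately --- essentially the route underlying the original statement.)
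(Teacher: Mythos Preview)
The paper does not prove this lemma: it is quoted from Bo\.zyk, Defrain, Okrasa, and Pilipczuk~\cite{bozyk2022digraphs} and used as a black box, so there is no argument in the present paper to compare your proposal against.

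As for the proposal itself, your first two paragraphs are sound. The cycle $C_{P,Q}$ is well defined, it does use the first edge of $P$ and the last edge of $Q$, and two edge-disjoint cycles through $x$ that meet at some $y\neq x$ indeed yield onions of both orientations. The difficulty, as you yourself flag, lies entirely in the third paragraph, and there the proposal is a plan rather than a proof. ``Rerouting past the last crossing'' and ``an Erd\H{o}s--Szekeres/sunflower-type extraction applied to the linear orders in which crossing points appear'' are slogans, not arguments: you do not specify which crossings, which orders, why the surviving family remains large, or why after cleaning the cycles still meet away from $x$ and do so at $2t$ pairwise distinct witnesses. Your fallback suggestion---to ``route through the directed-immersion machinery of Bo\.zyk et al.''---is circular, since that is exactly where the lemma originates. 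So the proposal identifies the right target objects but leaves the actual combinatorial work undone; as it stands it is an outline of an approach, not a proof.
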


We make use of this result to identify a situation in which we can expose a large packing of \tripods.

\begin{corollary}
	\label{lem:web_to_packing}
	Let $D$ be a \migration digraph, $k$ be a positive integer,
  $\Kk$ be a linkage with $\Start{\Kk} \subseteq S$, and $\Ll$ be a linkage with $\End{\Ll} \subseteq T$. Suppose $\Abs{\Kk} = \Abs{\Ll} \geq \bound{lem:onion_harvesting}{g}{k}$ and $\Kk$ and $\Ll$ pairwise intersect. Then $D$ contains a packing of $k$ \tripods.
\end{corollary}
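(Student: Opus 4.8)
The plan is to derive \cref{lem:web_to_packing} from the Onion Harvesting Lemma (\cref{lem:onion_harvesting}), confirming the slogan from the introduction that \tripods are onions in disguise. Since \cref{lem:onion_harvesting} talks about edge-intersecting, pairwise edge-disjoint families of paths whereas our hypothesis and our conclusion are about vertex-intersection and vertex-disjointness, I would first perform vertex-splitting on all of $D$. Explicitly, let $D^\star$ be obtained from $D$ by replacing each vertex $v$ with two vertices $v_{\mathrm{in}},v_{\mathrm{out}}$ joined by a \emph{splitting edge} $(v_{\mathrm{in}},v_{\mathrm{out}})$, replacing each edge $(u,v)$ of $D$ by $(u_{\mathrm{out}},v_{\mathrm{in}})$, adding a fresh vertex $x$, and adding the edges $(x,s_{\mathrm{in}})$ for all $s\in S$ and $(t_{\mathrm{out}},x)$ for all $t\in T$. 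The one bookkeeping fact I need, immediate from the construction, is that the splitting edge of $v$ is the unique edge leaving $v_{\mathrm{in}}$ and the unique edge entering $v_{\mathrm{out}}$; consequently a path of $D^\star$ traverses the splitting edge of $v$ whenever it visits $v$, unless it starts at $v_{\mathrm{out}}$ or ends at $v_{\mathrm{in}}$. In particular the projection that maps $v_{\mathrm{in}},v_{\mathrm{out}}\mapsto v$ and deletes $x$ sends any path of $D^\star$ that avoids $x$ to a path of $D$.

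Next I would apply \cref{lem:onion_harvesting} with $t\coloneqq k$, the vertex $x$ as above, and the families $\Pp\coloneqq\{P_K:K\in\Kk\}$ and $\Qq\coloneqq\{Q_L:L\in\Ll\}$, where $P_K$ is the image of $K$ in $D^\star$ with the edge $(x,(\Start{K})_{\mathrm{in}})$ prepended (which is legal since $\Start{K}\in S$), and $Q_L$ is the image of $L$ with the edge $((\End{L})_{\mathrm{out}},x)$ appended (legal since $\End{L}\in T$). Then every $P_K$ starts at $x$ and every $Q_L$ ends at $x$; whenever $K$ and $L$ meet in a vertex $v$, both $P_K$ and $Q_L$ contain the splitting edge of $v$, so $E(P_K)\cap E(Q_L)\neq\emptyset$; and since $\Kk$ is a linkage, the images of its members are vertex-disjoint and the prepended edges $(x,(\Start{K})_{\mathrm{in}})$ are pairwise distinct, so $\Pp$ is a family of pairwise edge-disjoint paths, and symmetrically for $\Qq$. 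As $|\Pp|=|\Kk|\geq\bound{lem:onion_harvesting}{g}{k}$ and $|\Qq|=|\Ll|\geq\bound{lem:onion_harvesting}{g}{k}$, \cref{lem:onion_harvesting} yields an onion-star of order $k$ with centre $x$ in $D^\star$.

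Finally I would read off the $k$ desired \tripods from the $k$ onions of this onion-star whose root is $x$; the remaining $k$ onions (those whose \tip is $x$) encode the mirror structure of one source fanning out to two sinks and are simply discarded. Fix such an onion $Z$, with root $x$ and \tip $y$. Edge-disjointness of the two $x$-$y$ paths of $Z$ forces them to leave $x$ along two distinct edges $(x,(s_1)_{\mathrm{in}})$ and $(x,(s_2)_{\mathrm{in}})$ with distinct sources $s_1,s_2$, and forces $y=c_{\mathrm{in}}$ for some vertex $c$ (it cannot equal $c_{\mathrm{out}}$, for then both paths would use the splitting edge of $c$ as their last edge); the $y$-$x$ path of $Z$ then begins with the splitting edge of $c$ and enters $x$ along some edge $(t_{\mathrm{out}},x)$ with $t\in T$. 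Applying the projection to $Z$ therefore produces three paths $s_1\to c$, $s_2\to c$, $c\to t$ in $D$, and these form a \tripod provided they are pairwise disjoint except in $c$. This disjointness, together with the vertex-disjointness of the \tripods coming from two different onions $Z,Z'$ of the onion-star, follows from the bookkeeping fact above: a vertex $w$ lying on two of the paths in question, other than the relevant centre, would be traversed through its splitting edge by two edge-disjoint subpaths of the onion-star --- impossible --- whereas if $w$ equals a centre $c$ of one of the onions, then that onion's $y$-$x$ path also traverses the splitting edge of $c$, the same contradiction (here one uses that distinct onions of an onion-star have distinct \tips, hence distinct centres). I expect this last step --- converting edge-disjointness of the onion-star into vertex-disjointness of the \tripods --- to be the only point requiring care, and it is precisely what performing the vertex-splitting up front makes routine.
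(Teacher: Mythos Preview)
Your proposal is correct and follows essentially the same approach as the paper: vertex-split, add an apex $x$ wired to sources and sinks, apply the Onion Harvesting Lemma, and project the $k$ onions rooted at $x$ back to \tripods. The only cosmetic differences are that the paper splits just the union of the paths in $\Kk\cup\Ll$ (rather than all of $D$) and wires $x$ only to $\Start{\Kk}$ and $\End{\Ll}$ (rather than to all of $S$ and $T$); neither change affects the argument.
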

\begin{proof}
	We build an auxiliary digraph $\tilde{D}$.
	To this end, we start with the union of all the paths in $\Kk\cup \Ll$.
	First, we split every vertex $v$ into two vertices $v^{\variablestyle{in}}$ and $v^{\variablestyle{out}}$ and we add the edge $\Brace{v^{\variablestyle{in}},v^{\variablestyle{out}}}$. Then we replace every edge $e$ of $D$ by the edge $\Brace{\Tail{e}^{\variablestyle{out}},\Head{e}^{\variablestyle{in}}}$. Finally, we add a new vertex $x$ and edges $(x,u^{\variablestyle{in}})$ for each $u\in \Start{\Kk}$ and $(v^{\variablestyle{out}},x)$ for each $v\in \End{\Ll}$.

	Every path $P$ in~$D$ naturally gives rise to a path $\tilde{P}$ in $\tilde{D}$, obtained by applying the edge replacement $e\mapsto \Brace{\Tail{e}^{\variablestyle{out}},\Head{e}^{\variablestyle{in}}}$ to every edge of $P$ and adding all the edges $\Brace{v^{\variablestyle{in}},v^{\variablestyle{out}}}$ for vertices $v$ traversed by $P$. Construct a family of paths $\Pp$ from $\Kk$ by including, for each $P\in \Kk$, the path $\tilde{P}$ with the edge $(x,\Start{P}^{\variablestyle{in}})$ prepended. Similarly, $\Qq$ is constructed from $\Ll$ by including, for each $Q\in \Ll$, the path $\tilde{Q}$ with the edge $(\End{Q}^{\variablestyle{out}},x)$ appended. Thus, the paths of $\Pp$ all start in $x$ and are otherwise vertex-disjoint, and the paths of $\Qq$ all end in $x$ and are otherwise vertex-disjoint.

		\begin{figure}[!ht]
		\centering
		\begin{tikzpicture}
			\node (centre) at (0,0) {};
			\def\dist{0.5}
			\node[vertex,CornflowerBlue] (s-3) at ($(centre)+(180:2.5)$) {};
			\node[vertex,CornflowerBlue] (s-2) at ($(s-3)+(90:\dist)$) {};
			\node[vertex,CornflowerBlue] (s-1) at ($(s-2)+(90:\dist)$) {};
			\node[vertex,CornflowerBlue] (s-4) at ($(s-3)+(270:\dist)$) {};
			\node[vertex,CornflowerBlue] (s-5) at ($(s-4)+(270:\dist)$) {};

			\foreach \i in {1,...,5}
			{
				\node (k-\i) at ($(s-\i)+(20:3.3)$) {};
				\draw (s-\i) edge[path,CornflowerBlue] (k-\i);

				\node (lp-\i) at ($(k-\i)+(275:2.4)$) {};
			}
			\node (l-1) at ($(k-1)+(170:0.8)$) {};
			\node (l-2) at ($(l-1)+(200:0.43)$) {};
			\node (l-3) at ($(l-2)+(200:0.43)$) {};
			\node (l-4) at ($(l-3)+(200:0.43)$) {};
			\node (l-5) at ($(l-4)+(200:0.43)$) {};

			\node[vertex,LavenderMagenta] (t-3) at ($(centre)+(0:2.5)$) {};
			\node[vertex,LavenderMagenta] (t-2) at ($(t-3)+(90:\dist)$) {};
			\node[vertex,LavenderMagenta] (t-1) at ($(t-2)+(90:\dist)$) {};
			\node[vertex,LavenderMagenta] (t-4) at ($(t-3)+(270:\dist)$) {};
			\node[vertex,LavenderMagenta] (t-5) at ($(t-4)+(270:\dist)$) {};

			\foreach\i in {1,...,5}
			{
				\draw[directededge,dashed,LavenderMagenta] ($(l-\i)$) to[quick curve through ={($(l-\i)+(270:{2*\dist})$) ($(l-\i)+(270:{5*\dist})$) ($(lp-\i)$) ($(t-\i)+(190:1)$)}] ($(t-\i)$);
			}

			\begin{pgfonlayer}{background}
			\def\setdist{0.4}
			\draw[line width=1.5pt,CornflowerBlue] ($(s-1)+(180:\setdist)$) to[closed,curve through ={($(s-1)+(90:\setdist)$) ($(s-1)+(0:\setdist)$) ($(s-1)!0.5!(s-5)+(0:\setdist)$) ($(s-5)+(0:\setdist)$) ($(s-5)+(270:\setdist)$) ($(s-5)+(180:\setdist)$)}] ($(s-5)!0.5!(s-1)+(180:\setdist)$);

			\draw[line width=1.5pt,LavenderMagenta] ($(t-1)+(180:\setdist)$) to[closed,curve through ={($(t-1)+(90:\setdist)$) ($(t-1)+(0:\setdist)$) ($(t-1)!0.5!(t-5)+(0:\setdist)$) ($(t-5)+(0:\setdist)$) ($(t-5)+(270:\setdist)$) ($(t-5)+(180:\setdist)$)}] ($(t-5)!0.5!(t-1)+(180:\setdist)$);
			\end{pgfonlayer}

			\node[vertex,PastelOrange] (x) at ($(centre)+(270:4)$) {};
			\node[PastelOrange!85!black] (x-label) at ($(x)+(300:0.3)$) {$x$};

			\node (sp-5) at ($(s-5)+(225:0.8)$) {};
			\node (sp-4) at ($(sp-5)+(160:0.43)$) {};
			\node (sp-3) at ($(sp-4)+(160:0.43)$) {};
			\node (sp-2) at ($(sp-3)+(160:0.43)$) {};
			\node (sp-1) at ($(sp-2)+(160:0.43)$) {};

			\node (tp-5) at ($(t-5)+(315:0.8)$) {};
			\node (tp-4) at ($(tp-5)+(20:0.43)$) {};
			\node (tp-3) at ($(tp-4)+(20:0.43)$) {};
			\node (tp-2) at ($(tp-3)+(20:0.43)$) {};
			\node (tp-1) at ($(tp-2)+(20:0.43)$) {};

			\begin{pgfonlayer}{background}
			\foreach\i in {1,...,5}
			{
				\draw[directededge,PastelOrange] ($(x)$) to[quick curve through ={($(x)+({190-(\i*5)}:1)$) ($(sp-\i)$)}] ($(s-\i)$);

				\draw[directededgedecorated,PastelOrange] ($(t-\i)$) to[quick curve through ={($(tp-\i)$) ($(x)+({350+(\i*5)}:1)$)}] ($(x)$);
			}
			\end{pgfonlayer}

			\foreach\i in {3,5}
			{
				\draw[line width=1pt,myGrey] ($(x)$) to[quick curve through ={($(x)+({190-(\i*5)}:1)$) ($(sp-\i)$)}] ($(s-\i)$);
			}
			\draw[line width=1pt,myGrey] ($(t-3)$) to[quick curve through ={($(tp-3)$) ($(x)+({350+(3*5)}:1)$)}] ($(x)$);
			\coordinate (A) at ($(s-3)$);
			\coordinate (B) at ($(k-3)$);
			\path [name path=A--B] (A) -- (B);
			\coordinate (C) at ($(l-3)$);
			\coordinate (D) at ($(l-3)+(270:{5*\dist})$);
			\path [name path=C--D] (C) -- (D);
			\coordinate (X) at ($(s-5)$);
			\coordinate (Y) at ($(k-5)$);
			\path [name path=X--Y] (X) -- (Y);
			\path [name intersections={of=A--B and C--D,by=E}];
			\path [name intersections={of=X--Y and C--D,by=F}];

			\draw (s-3) edge[decorate,decoration={snake, segment length=15mm, amplitude=0.7mm},marked] (E);
			\draw (s-5) edge[decorate,decoration={snake, segment length=15mm, amplitude=0.7mm},marked] (F);
			\draw (s-3) edge[path,line width=1pt,myGrey] (E);
			\draw (s-5) edge[path,line width=1pt,myGrey] (F);
			\draw (E) edge[directededge,dashed,line width=1pt,myGrey,out=260,in=100] (F);
			\draw (E) edge[marked,out=260,in=100] (F);

			\draw[directededge,dashed,line width=1pt,myGrey] ($(F)$) to[quick curve through ={ ($(lp-3)$) ($(t-3)+(190:1)$)}] ($(t-3)$);
			\draw[marked] ($(F)$) to[quick curve through ={ ($(lp-3)$) ($(t-3)+(190:1)$)}] ($(t-3)$);

			\node[CornflowerBlue] (K-label) at ($(k-2)+(20:0.3)$) {$\Kk$};
			\node[LavenderMagenta] (L-label) at ($(l-3)+(110:0.3)$) {$\Ll$};
		\end{tikzpicture}

		\caption{
		The two linkages and the new vertex $x$.
		We also illustrate how an \textcolor{myGrey!80!white}{onion} with root $x$ contains a subgraph that corresponds to a \textcolor{myViolet!50!white}{\tripod} in the \migration digraph $D$.}
		\label{fig:web_aux_graph_constr}
	\end{figure}
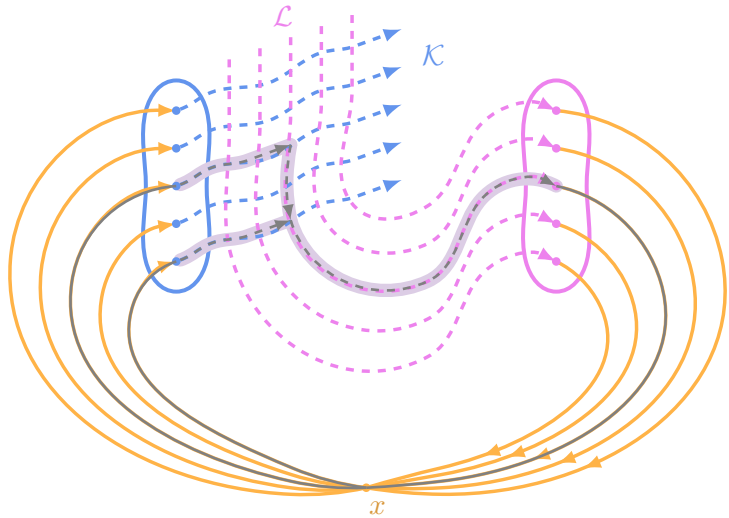

	Therefore, families $\Pp$ and $\Qq$ satisfy the prerequisites of~\cref{lem:onion_harvesting}.
	We infer that $\tilde{D}$ contains an onion-star of order $k$ with centre $x$.
	We consider the family $\mathcal{Z}$ consisting of the $k$ onions with root $x$ that this onion-star contains.

	We show that every onion in $\mathcal{Z}$ contains a subgraph that corresponds to a \tripod in $D$. Consider an onion $Z\in \mathcal{Z}$. Let $y$ be the \tip of $Z$, let $P^Z_1,P^Z_2$ be the two $x$-$y$-paths of $Z$, and let $Q^Z$ be the $y$-$x$-path of $Z$.
	As $y$ has two distinct in-edges, it is equal to $v_y^{\variablestyle{in}}$ for some vertex $v_y\in \V{D}$.
	Therefore, the subpaths of $P^Z_1,P^Z_2$ obtained by removing their first edges correspond to two paths $P^D_1$ and $P^D_2$ in $D$ that start in two different vertices of $\Start{\Kk}\subseteq S$ and end in $v_y$.
	Similarly, the subpath of $Q^Z$ obtained by removing its last edge corresponds to a path $Q^D$ in $D$ starting in $v_y$ and ending in $\End{\Ll}\subseteq T$.
	To see that $P^D_1, P^D_2$ and $Q^D$ yield a \tripod $R(Z)$ in $D$, note that the only vertex that $P^D_1$ (and, analogously, $P^D_2$) share with $Q^D$ is $v_y$; if both $P^D_1$ and $Q^D$ contain a vertex $v_u \neq v_y$, from the construction it follows that $\Brace{v_u^{\variablestyle{in}},v_u^{\variablestyle{out}}} \in E(P^Z_1) \cap E(Q^Z)$ and $Z$ is not an onion.
	Similarly, since the onions in $\mathcal{Z}$ are pairwise edge-disjoint, it follows that the \tripods $R(Z)$ for $Z\in \mathcal{Z}$ are pairwise vertex-disjoint, and thus $\{R(Z)\colon Z\in \mathcal{Z}\}$ is a packing of $k$ \tripods in $D$.
\end{proof}

\subsection{Proof using Ramsey arguments}
\label{sec:ramsey}

In this~\namecref{sec:ramsey} we give a direct proof of \cref{thm:crossing_linkages} using Ramsey arguments. We use the following well-known facts. In the second one, a digraph is \emph{semi-complete} if every pair of distinct vertices is connected by at least one edge, and a \emph{transitive tournament} of order $n$ is a digraph with vertex set $\{v_1,\ldots,v_n\}$ and edge set $\{(v_i,v_j)\colon 1\leq i<j\leq n\}$.

\begin{lemma}\label{lem:ramsey}
 There is a function $\Ramsey\colon \N\times \N\to \N$ such that for any two positive integers $a$ and $b$
 every undirected graph on at least $\Ramsey(a,b)$ vertices contains a clique of size $a$ or an independent set of size $b$.
\end{lemma}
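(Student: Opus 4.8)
The plan is to prove the standard two-colour finite Ramsey theorem for graphs by a double induction on $a+b$, which additionally yields an explicit bound on $\Ramsey$. First I would fix the base cases $\Ramsey(1,b)=\Ramsey(a,1)=1$ for all positive integers $a,b$, which are correct because a single vertex is simultaneously a clique of size $1$ and an independent set of size $1$. For the inductive step I claim that one may take
\[
\Ramsey(a,b)=\Ramsey(a-1,b)+\Ramsey(a,b-1)\qquad\text{for all }a,b\ge 2.
\]
Since the right-hand side is finite by the inductive hypothesis, this defines $\Ramsey$ on all of $\N\times\N$, and unfolding the recursion gives the familiar closed-form bound $\Ramsey(a,b)\le\binom{a+b-2}{a-1}$.

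To verify the inductive step, let $G$ be an undirected graph with $n\ge\Ramsey(a-1,b)+\Ramsey(a,b-1)$ vertices, fix an arbitrary vertex $v$, and partition $V(G)\setminus\{v\}$ into the set $N$ of neighbours of $v$ and the set $M$ of non-neighbours of $v$. Then $|N|+|M|=n-1\ge\Ramsey(a-1,b)+\Ramsey(a,b-1)-1$, so at least one of $|N|\ge\Ramsey(a-1,b)$ or $|M|\ge\Ramsey(a,b-1)$ holds. In the first case, by induction $G[N]$ contains either an independent set of size $b$, in which case we are already done, or a clique of size $a-1$, which together with $v$ forms a clique of size $a$ in $G$. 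The second case is symmetric: $G[M]$ contains either a clique of size $a$, and we are done, or an independent set of size $b-1$, which together with $v$ forms an independent set of size $b$.

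There is no substantial obstacle here; this is a classical result, and the only points requiring a little care are getting the base cases and the degenerate ranges of $a$ and $b$ right so that the recursion is well-founded, and keeping track in each branch of whether the vertex $v$ is appended to the clique or to the independent set. If one prefers to avoid committing to an explicit recursive formula, one can instead prove by induction on $a+b$ merely that a finite value works, defining $\Ramsey(a,b)$ as the least integer for which the conclusion holds; the argument above shows the induction still goes through, with the same recursive inequality bounding the value.
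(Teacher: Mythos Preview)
Your proposal is correct and is precisely the standard Erd\H{o}s--Szekeres argument. The paper does not actually prove this lemma; it merely states it as a well-known fact and remarks that standard proofs yield the bound $\Ramsey(a,b)=\binom{a+b-2}{a-1}$, which is exactly the closed form your recursion produces.
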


\begin{lemma}\label{lem:trans}
 There is a function $\Trans\colon \N\to \N$ such that for any positive integer $c$
 every semi-complete digraph on $\Trans(c)$ vertices contains a transitive tournament of order $c$ as a subgraph.
\end{lemma}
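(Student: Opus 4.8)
The plan is to reduce the statement to the classical fact that large tournaments contain large transitive subtournaments, and to prove that fact by a short induction that works equally well directly in semi-complete digraphs. First note that a transitive tournament of order $c$, viewed as a subgraph of a digraph $D$, is nothing more than a $c$-element subset $\{v_1,\dots,v_c\}\subseteq\V{D}$ together with a linear order $v_1,\dots,v_c$ such that $(v_i,v_j)\in\E{D}$ for all $1\le i<j\le c$; any backward edges that happen to be present in $D$ are simply not included in the subgraph. So it suffices to find such an ordered $c$-set.

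I would set $\Trans(c)\coloneqq 2^{c-1}$ and prove by induction on $c$ that every semi-complete digraph $D$ with $\Abs{\V{D}}\ge 2^{c-1}$ contains such an ordered $c$-set. The case $c=1$ is trivial. For the inductive step, fix an arbitrary vertex $v\in\V{D}$ and let $A\coloneqq\{u\in\V{D}\colon (v,u)\in\E{D}\}$ and $B\coloneqq\V{D}\setminus(\{v\}\cup A)$. Since $D$ is semi-complete, every $u\in B$ satisfies $(u,v)\in\E{D}$. As $A$ and $B$ partition $\V{D}\setminus\{v\}$, a set of size at least $2^{c-1}-1$, one of them has size at least $\lceil(2^{c-1}-1)/2\rceil=2^{c-2}$. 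Both $D[A]$ and $D[B]$ are again semi-complete, so we may apply the induction hypothesis to whichever of them is large. If $\Abs{A}\ge 2^{c-2}$, the hypothesis yields such an ordered $(c-1)$-set $v_2,\dots,v_c$ in $D[A]$, and then $v,v_2,\dots,v_c$ is the desired ordered $c$-set, because $(v,v_i)\in\E{D}$ for every $i$ by the definition of $A$. If instead $\Abs{B}\ge 2^{c-2}$, the hypothesis yields such an ordered $(c-1)$-set $v_1,\dots,v_{c-1}$ in $D[B]$, and then $v_1,\dots,v_{c-1},v$ works, since $(v_i,v)\in\E{D}$ for every $i$ by the definition of $B$.

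There is essentially no obstacle in this argument; it is the textbook proof. The only points that require a little care are the handling of pairs of vertices joined by edges in both directions (which is why we define $B$ as a complement rather than as the in-neighbourhood of $v$, so that $A$ and $B$ stay disjoint), and the elementary inequality $\lceil(2^{c-1}-1)/2\rceil=2^{c-2}$ for $c\ge 2$, which guarantees the recursion always has enough vertices to continue.
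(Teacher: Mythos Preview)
Your proof is correct and is precisely the standard inductive argument the paper alludes to; the paper itself does not give a proof but simply cites the lemma as well known and records the bound $\Trans(c)=2^{c-1}$, which your argument attains.
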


In fact, standard proofs of \cref{lem:ramsey,lem:trans} yield that one can take
\[\Ramsey(a,b)=\binom{a+b-2}{a-1}\qquad\textrm{and}\qquad \Trans(c)=2^{c-1}.\]

\begin{proof}[Proof of \cref{thm:crossing_linkages}]
 We prove the statement for
 \[\bound{thm:crossing_linkages}{g}{k}\coloneqq \Ramsey(\Trans(2\bound{lem:onion_harvesting}{g}{k}),k).\]
 Let $Y\coloneqq \End{\Pp}=\End{\Qq}$; then $|Y|=|\Pp|=|\Qq|\geq \bound{thm:crossing_linkages}{g}{k}$. For $y\in Y$, by $P^y$ and $Q^y$ we denote the unique paths of $\Pp$, respectively $\Qq$, whose end-vertex is $y$.

 Define an auxiliary digraph $\vec H$ on the vertex set $Y$ as follows: for distinct $y,z\in Y$, put an edge $(y,z)$ in $\vec H$ if and only if $P^y$ intersects $Q^z$. By applying \cref{lem:ramsey} with $a\coloneqq \Trans(2\bound{lem:onion_harvesting}{g}{k})$ and $b\coloneqq k$ to the undirected graph underlying $\vec H$, we obtain one of the following outcomes:
 \begin{itemize}
  \item a subgraph $\vec J$ of $\vec H$ on $\Trans(2\bound{lem:onion_harvesting}{g}{k})$ vertices that is semi-complete; or
  \item a subset $Z\subseteq Y$ of size $k$ such that in $\vec H$ there are no edges with both its head and its tail in $Z$.
 \end{itemize}
 In the latter case, we observe that for each $z\in Z$, $P^z\cup Q^z$ is a subgraph of $D$ that contains a \tripod, and all those \tripods are pairwise vertex-disjoint
 yielding the desired packing of $k$ \tripods in $D$.

 We are left with the former case. Applying \cref{lem:trans} to $\vec J$, we find a transitive tournament of order $2\bound{lem:onion_harvesting}{g}{k}$ that is a subgraph of $\vec H$. In other words, there are distinct vertices $y_1,\ldots,y_{2\ell}\in Y$, where $\ell\coloneqq \bound{lem:onion_harvesting}{g}{k}$, such that $(y_i,y_j)$ is an edge in $\vec H$ for all $1\leq i<j\leq 2\ell$.

 Define
 \[\Kk\coloneqq \{P^y\colon y\in \{y_1,\ldots,y_\ell\}\}\qquad\textrm{and}\qquad \Ll\coloneqq \{Q^y\colon y\in \{y_{\ell+1},\ldots,y_{2\ell}\}\},\]
 and observe that
 \begin{itemize}
  \item $|\Kk|=|\Ll|=\ell=\bound{lem:onion_harvesting}{g}{k}$;
  \item $\Start{\Kk}\subseteq S$ and $\End{\Ll}\subseteq T$; and
  \item $\Kk$ and $\Ll$ pairwise intersect, for $(y_i,y_j)$ is an edge in $\vec H$ for all $(i,j)\in \{1,\ldots,\ell\}\times\{\ell+1,\ldots,2\ell\}$.
 \end{itemize}
 Therefore, we may apply \cref{lem:web_to_packing} to conclude that $D$ contains a packing of $k$ \tripods.
\end{proof}

\subsection{Constructing \tripods from a path crossing a linkage}
\label{sec:web_or_tripod}

In this section we give another proof of \cref{thm:crossing_linkages}.
As an intermediate statement, we show that in order to find a large packing of \tripods in a \migration digraph, it suffices to find a large linkage from the sources to the sinks and a single path intersecting all the paths in the linkage.

\begin{lemma}
	\label{thm:path_crossing_linkage}
	There is a function $\bound{thm:path_crossing_linkage}{g}{}\colon \N \to \N$ with the following property.
	Let $D$ be a \migration digraph, $k$ be a positive integer,
  $\Pp$ be an $S$-$T$-linkage of size at least $\bound{thm:path_crossing_linkage}{g}{k}$, and $Q$ be a path in $D$ that intersects every path of $\Pp$.
	Then $D$ contains a packing of $k$ \tripods.
\end{lemma}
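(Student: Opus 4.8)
The plan is to reduce to \cref{lem:web_to_packing} by manufacturing, from the single path $Q$ crossing the linkage $\Pp$, two linkages $\Kk$ and $\Ll$ of size $\Omega_k(1)$ that pairwise intersect, with $\Start{\Kk}\subseteq S$ and $\End{\Ll}\subseteq T$. The key observation is that $Q$ crosses each $P\in\Pp$ at a first point and a last point along $Q$; order $\Pp$ by the position along $Q$ of the first intersection with $Q$. Writing $\Pp=\{P_1,\dots,P_m\}$ in this order (ties broken arbitrarily) with $m\geq \bound{thm:path_crossing_linkage}{g}{k}$, the idea is to split $\Pp$ into a ``left half'' and a ``right half'': let $\Pp^{\mathrm{L}}=\{P_1,\dots,P_{m/2}\}$ and $\Pp^{\mathrm{R}}=\{P_{m/2+1},\dots,P_m\}$. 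For a path $P_i$ in the left half, let $a_i$ be the first vertex of $Q$ lying on $P_i$; for a path $P_j$ in the right half, let $b_j$ be the last vertex of $Q$ lying on $P_j$. Intuitively, every $a_i$ lies on $Q$ before every $b_j$, because $a_i$ (first crossing of $P_i$) precedes $a_j$ (first crossing of $P_j$) which precedes or equals $b_j$.

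Now I would build $\Kk$: for each $P_i\in\Pp^{\mathrm{L}}$, take the prefix of $P_i$ from its start-vertex (in $S$) up to $a_i$. These are vertex-disjoint since the $P_i$ are, so $\Kk$ is a linkage with $\Start{\Kk}\subseteq S$ and each path of $\Kk$ ending on $Q$ at its point $a_i$. Symmetrically build $\Ll$: for each $P_j\in\Pp^{\mathrm{R}}$, take the suffix of $P_j$ from $b_j$ to its end-vertex (in $T$); this is a linkage with $\End{\Ll}\subseteq T$, each path starting on $Q$ at $b_j$. To get pairwise intersection I would route through $Q$: replace each path of $\Kk$ by itself together with the subpath of $Q$ from $a_i$ to $\End{Q}$ (so it now ends at $\End{Q}$), and each path of $\Ll$ by the subpath of $Q$ from $\Start{Q}$ to $b_j$ together with $P_j$'s suffix (so it now starts at $\Start{Q}$). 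Since every $a_i$ precedes every $b_j$ on $Q$, the extended $\Kk$-path through $a_i$ shares the $Q$-segment $[a_i,b_j]$ with the extended $\Ll$-path through $b_j$; hence the two families pairwise intersect. The catch is that the extended paths of $\Kk$ are no longer pairwise disjoint (they all contain a tail of $Q$), and likewise for $\Ll$ — but \cref{lem:web_to_packing} only requires $\Kk$ and $\Ll$ to be \emph{linkages} internally; so in fact I must not extend within each family. The clean fix is: keep $\Kk$ as the disjoint prefixes ending at $a_i$ and $\Ll$ as the disjoint suffixes starting at $b_j$, and instead note directly that path $K_i\in\Kk$ and path $L_j\in\Ll$ both meet $Q$ — but they need to meet \emph{each other}. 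This forces a cleaner combinatorial choice.

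The honest route is therefore to apply a Ramsey/Dilworth-type argument first to obtain a large \emph{sub}-collection in which $Q$ can be used as a ``shared spine.'' Concretely: for each $P\in\Pp$, record the interval $I_P=[a_P,b_P]$ of $Q$ between its first and last intersection with $Q$. These are $m$ subintervals of the path $Q$; by the Helly property of intervals on a line, either $\Omega(\sqrt m)$ of them are pairwise disjoint, or some point of $Q$ lies in $\Omega(\sqrt m)$ of them. In the first case, take those pairwise-disjoint intervals, split them into a left half and a right half along $Q$, and for the left half take prefixes of $P$ ending at $b_P$ and for the right half take suffixes of $P$ starting at $a_P$, augmenting each with the $Q$-segment joining its $Q$-endpoint to the splitting point $q_0$ of $Q$; now all augmented left paths pass through $q_0$ and end in $S$-to-$q_0$... — again the augmentation breaks disjointness. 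I expect the actual argument in the paper handles this by routing the \emph{short} way: for each chosen $P$ in the left part, extend its $S$-prefix along $Q$ up to $q_0$; these extensions do collide on $Q$, but we only need each of $\Kk$, $\Ll$ to be a linkage and to pairwise-cross, and collisions \emph{within} $\Kk$ are fatal. So the correct statement must be that the chosen intervals are not merely disjoint but \emph{nested} or \emph{ordered} so that the $Q$-segments used by $\Kk$-paths are themselves disjoint — i.e. use the point $a_P$ itself as the endpoint and do \emph{not} walk along $Q$ at all inside $\Kk$; the crossing with $\Ll$ is then achieved because $\Ll$-paths walk backward along $Q$ through $a_P$. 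This asymmetry (only one family uses $Q$) is what makes it work. The main obstacle, and the step I would spend the most care on, is precisely this bookkeeping: choosing a large sub-linkage and a consistent orientation so that one family ($\Ll$, say) can absorb a monotone piece of $Q$ reaching back past all the anchor points of the other family ($\Kk$), while each family individually stays a genuine linkage. Once that is set up, the sizes are governed by a single $\Ramsey$-type loss, so $\bound{thm:path_crossing_linkage}{g}{k}$ can be taken polynomial in $\bound{lem:onion_harvesting}{g}{k}$, and \cref{lem:web_to_packing} finishes the argument.
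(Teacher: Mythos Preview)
Your plan to reduce to \cref{lem:web_to_packing} is the right target, and you have correctly put your finger on the obstacle: to make a $\Kk$-path and an $\Ll$-path intersect you seem forced to route at least one of them along a piece of $Q$, but then two paths of that family share that piece and the family is no longer a linkage. Unfortunately, none of the fixes you sketch actually close this gap. In particular, your final ``asymmetric'' suggestion --- keep $\Kk$ as the disjoint $S$-prefixes $P_i[s_i,a_i]$ and let each $L_j$ absorb a monotone piece of $Q$ reaching back past all the anchor points $a_i$ --- fails for exactly the reason you flagged one paragraph earlier: if every $L_j$ must contain all the anchors $a_i$, then any two $L_j$'s share those vertices and $\Ll$ is not a linkage. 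The interval/Helly dichotomy you propose is a valid combinatorial statement, but neither branch produces pairwise-intersecting \emph{linkages} without running into the same collision on $Q$. What you have is an honest exploration that ends at the real difficulty, not a proof.

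The paper resolves this by a mechanism you do not have: an \emph{iterative extraction} (\cref{lem:tripod_or_web}). One cuts $Q$ into short disjoint segments $B_1,\dots,B_\ell$, each meeting a prescribed number of paths of $\Pp$. Either two of the sets $\Pp(B_i),\Pp(B_j)$ differ substantially, in which case one peels off a single tripod supported on $B_i$ and recurses on the subpath $B_j$ together with $\Pp(B_j)\setminus\Pp(B_i)$ (all disjoint from that tripod); or all the $\Pp(B_i)$ are nearly equal, so $\Ll\coloneqq\bigcap_i\Pp(B_i)$ is a large sublinkage of $\Pp$ with every path meeting \emph{every} segment $B_i$. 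Now --- and this is the step that replaces your failed ``absorb a piece of $Q$'' idea --- one applies Menger's theorem in the subgraph formed by $Q$ together with the paths of $\Pp$ that avoid all the $B_i$'s, obtaining an $S$--$\{\End{B_1},\dots,\End{B_\ell}\}$-linkage $\Kk$; by construction each path of $\Kk$ contains an entire segment $B_i$, distinct paths contain distinct segments, and hence $\Kk$ is a genuine linkage whose every path meets every path of $\Ll$. Your ``only one family uses $Q$'' intuition is in fact realised here, but with \emph{disjoint} pieces of $Q$, and the pairwise intersections come from the common sublinkage $\Ll$ hitting all those pieces. Iterating the extraction at most $k$ times then yields the packing.
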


The proof of \cref{thm:path_crossing_linkage} proceeds by a procedure of iterative extraction of \tripods. More precisely, given the situation described in the statement, we prove that one can find a packing of \tripods directly via an application of \cref{lem:web_to_packing}, or one can extract one \tripod $R$ in such a way that a significant part of the linkage $\Pp$ consists of paths disjoint from $R$, and $Q$ has a long subpath disjoint from $R$. The extraction procedure then proceeds on those remaining parts. A single step of the procedure is encapsulated in the following statement.

\begin{lemma}
	\label{lem:tripod_or_web}
	There exists a function $\bound{lem:tripod_or_web}{g}{} \colon \N \to \N$ with the following property.
	Let $\ell \in \N$, $D$ be a \migration digraph, $\Pp$ be an $S$-$T$-linkage of size $ \bound{lem:tripod_or_web}{g}{\ell}$, and $Q$ be a path intersecting every path of $\Pp$.
	Then $D$ contains
	\begin{enumerate}[label=(P.\roman*)]
		\item \label{lem:tripod_or_web:web} a linkage $\Kk$ with $\Start{\Kk} \subseteq S$ and a linkage $\Ll$ with $\End{\Ll} \subseteq T$ such that $\Abs{\Kk} = \Abs{\Ll} \geq \ell$, and $\Kk$ and $\Ll$ pairwise intersect, or
		\item \label{lem:tripod_or_web:tripod} a \tripod $R$, a sublinkage $\Pp' \subseteq \Pp$ of size at least $\ell$, and a subpath $Q' \subseteq Q$ such that
			\begin{itemize}
				\item $Q'$ intersects every path $P \in \Pp'$, and
				\item $Q'$ and the paths of $\Pp'$ are all vertex disjoint from the \tripod $R$.
			\end{itemize}
	\end{enumerate}
\end{lemma}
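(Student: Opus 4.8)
The plan is to localise how $Q$ interacts with the linkage $\Pp$ and then split into a ``separated'' case that directly produces a \tripod, and a ``concentrated'' case where the interaction piles up around a single vertex. First I would normalise $Q$: replace it by the portion of $Q$ between its first and its last vertex lying on $\bigcup\Pp$. The two discarded pieces avoid $\bigcup\Pp$ entirely, so the new $Q$ still meets every path of $\Pp$, and both endpoints of $Q$ now lie on paths of $\Pp$. For $P\in\Pp$, let $I_P=[a_P,b_P]$ be the shortest interval of positions along $Q$ containing every vertex of $Q$ that lies on $P$; this interval is nonempty, and since the paths of $\Pp$ are pairwise disjoint the left endpoints $a_P$ are pairwise distinct. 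A greedy left-to-right argument on these intervals shows that either \textbf{(a)} some $\ell+2$ of them are pairwise disjoint, hence linearly ordered along $Q$, or \textbf{(b)} at most $\ell+1$ vertices of $Q$ suffice to meet all of them, so some vertex $q^\ast$ of $Q$ lies in $I_P$ for at least $|\Pp|/(\ell+1)$ paths $P$. I would choose $g(\ell)$ to be a suitable polynomial in $\ell$, large enough to absorb the pigeonhole and Erdős--Szekeres losses below.

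In case \textbf{(a)}, let $I_{P_1},I_{P_2}$ be the first two of the disjoint intervals along $Q$, so $b_{P_1}$ precedes $a_{P_2}$. Let $c$ be the first vertex of $Q$ on $P_2$ and $u$ the last vertex of $Q$ on $P_1$; then $u$ precedes $c$ on $Q$. Set $R:=P_2\cup P_1[\Start{P_1},u]\cup Q[u,c]$. This is a \tripod with centre $c$: its two source-paths are $P_2[\Start{P_2},c]$ and $P_1[\Start{P_1},u]$ followed by $Q[u,c]$, and its sink-path is $P_2[c,\End{P_2}]$. Indeed $Q[u,c]$ meets $P_2$ only at $c$ (as $c$ is the first vertex of $Q$ on $P_2$) and meets $P_1$ only at $u$ (as $u$ is the last vertex of $Q$ on $P_1$), which forces the three legs to meet pairwise only at $c$ and makes the second source-path a path. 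Moreover $R$ meets $Q$ only at positions inside $I_{P_1}\cup[b_{P_1},a_{P_2}]\cup I_{P_2}$, that is, at positions at most $b_{P_2}$, whereas each of $I_{P_3},\dots,I_{P_{\ell+2}}$ lies entirely beyond $b_{P_2}$. Hence the suffix $Q'$ of $Q$ strictly after $b_{P_2}$ is a subpath of $Q$, is disjoint from $R$, and meets every path in $\Pp':=\{P_3,\dots,P_{\ell+2}\}$, which are themselves disjoint from $R$. This gives the second (\tripod) outcome.

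Case \textbf{(b)} is the heart of the argument. Let $\Pp^\ast$ consist of the paths whose interval contains $q^\ast$; discarding the at most one path of $\Pp^\ast$ that actually contains $q^\ast$, every $P\in\Pp^\ast$ meets $Q$ both strictly before and strictly after $q^\ast$. Listing the vertices of $Q$ on $P$ in the order along $P$, two consecutive ones lie on opposite sides of $q^\ast$; fix such a ``transition'' $(\ell_P,\ell'_P)$, so the subpath of $P$ between $\ell_P$ and $\ell'_P$ meets $Q$ only at $\ell_P$ and $\ell'_P$. By pigeonhole a constant fraction of $\Pp^\ast$ share the orientation of their transition (``left-then-right'' or ``right-then-left''), and in either case an Erdős--Szekeres step on the positions of $\ell_P,\ell'_P$ along $Q$ extracts a large ``nested'' or ``staircase'' sub-family. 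For each such sub-family I would build a \tripod $R$ that straddles $q^\ast$: join a vertex of $Q$ on one selected path to a vertex of $Q$ on another selected path by a subpath of $Q$ through $q^\ast$, choosing the endpoints so that $R$ meets $Q$ in only one vertex together with one short subpath; then the stretch of $Q$ around $q^\ast$ that survives is a subpath $Q'$ disjoint from $R$ meeting all of the (still many) remaining transition-paths, again the second outcome.

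The real difficulty in case \textbf{(b)} is that a selected path may meet $Q$ at further ``stray'' vertices that destroy the cleanliness of $R$ or of $Q'$. The way out is to show that an abundance of such stray meetings forces a grid-like pattern of subpaths of $Q$ crossed repeatedly by fragments of the selected paths, which is precisely a pair of pairwise-intersecting linkages $\Kk$ (starting in $S$) and $\Ll$ (ending in $T$) as in the first (web) outcome — so one of the two alternatives always holds, and the remaining work is bookkeeping in $g(\ell)$ to guarantee $|\Pp'|\ge\ell$. I expect controlling these stray meetings, and organising the nested/staircase subcases so that the surviving piece of $Q$ is genuinely long, to be the main technical obstacle.
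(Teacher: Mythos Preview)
Your case~(a) is correct and pleasant: with $\ell+2$ pairwise disjoint intervals along $Q$, the tripod built from $P_1$, $P_2$ and $Q[u,c]$ works, and the suffix $Q'$ together with $P_3,\dots,P_{\ell+2}$ gives outcome~(P.ii) exactly as you claim.

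Case~(b), however, is a genuine gap rather than just bookkeeping. The central issue is the one you flag yourself: in case~(b) every path $P\in\Pp^\ast$ has an interval $I_P$ containing $q^\ast$, so $P$ may meet $Q$ at many places on \emph{both} sides of $q^\ast$. When you build your tripod $R$ ``straddling $q^\ast$'' from two such paths and a piece of $Q$, those two paths can hit $Q$ again arbitrarily far from $q^\ast$, so your assertion that ``$R$ meets $Q$ in only one vertex together with one short subpath'' is unjustified. Worse, there is no surviving stretch of $Q$ ``around $q^\ast$'': $q^\ast$ itself sits inside $R$, and both sides of $q^\ast$ may be peppered with vertices of the two paths used. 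The nested/staircase dichotomy on transitions does not resolve this, because a transition only controls \emph{one} $P$-subpath between two consecutive $Q$-visits; it says nothing about the many other $Q$-visits of $P$. Your fallback, that an abundance of stray meetings yields the web outcome~(P.i), is only an intuition: you have not said what $\Kk$ and $\Ll$ are, nor why they pairwise intersect. So as it stands, outcome~(P.i) is never actually produced.

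The paper sidesteps this interval dichotomy entirely. It greedily chops $Q$ into $2\ell$ consecutive segments $A_1,B_1,\dots,A_\ell,B_\ell$, where each $A_i$ meets $\ell+\ell^3$ paths of $\Pp$ and each $B_i$ meets $\ell^2$. If two sets $\Pp(B_i),\Pp(B_j)$ differ by $\ge\ell$ paths, then two paths $P_1,P_2\in\Pp(B_i)\setminus\Pp(B_j)$ together with $B_i$ give a tripod that \emph{by construction} avoids $B_j$ (since $P_1,P_2\notin\Pp(B_j)$); take $Q'=B_j$ and $\Pp'=\Pp(B_j)\setminus\Pp(B_i)$. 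Otherwise $\Ll:=\bigcap_i\Pp(B_i)$ has size $\ge\ell$, and a short Menger argument in the subgraph formed by $Q$ and the paths of $\Pp$ avoiding every $B_j$ produces a linkage $\Kk$ from $S$ in which each path contains some $B_i$; since every path of $\Ll$ hits every $B_i$, this gives the web~(P.i). The point is that the segment decomposition manufactures a pair $B_i,B_j$ that are already vertex-disjoint subpaths of $Q$ with controlled interaction with $\Pp$, which is exactly what your case~(b) fails to secure.
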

\begin{proof}
	We may assume without loss of generality that $\ell\geq 2$. We prove the statement for
	\[\bound{lem:tripod_or_web}{g}{\ell}\coloneqq \ell(\ell+p+p\ell),\]
	where $p\coloneqq \ell^2$.

	First, we construct vertex-disjoint subpaths $A_1,B_1,A_2,B_2,\ldots,A_\ell,B_\ell$ of the path $Q$, placed in this order along $Q$ and called further \emph{segments}, with the following property: If for a segment $X$ we define $\Pp(X)$ to be the sublinkage of $\Pp$ consisting of the paths that intersect $X$, then we require that
	\begin{equation}\label{eq:kapibara}\tag{$\star$}
	|\Pp(A_i)|=\ell+p\ell\quad \textrm{and}\quad |\Pp(B_i)|=p,\qquad\textrm{for all }i\in [\ell].
	\end{equation}
	The segments are constructed by a greedy procedure that extracts them one by one along $Q$.

	The first segment $A_1$ is the shortest prefix of $Q_0\coloneqq Q$ that intersects $\ell+p\ell$ different paths from $\Pp$. Let $Q_0'$ be the suffix of $Q_0$ obtained by removing all the vertices traversed by $A_1$. Then $B_1$ is the shortest prefix of $Q_0'$ that intersects $p$ different paths from $\Pp$. Then we define $Q_1$ to be the suffix of $Q_0'$ obtained by removing all the vertices traversed by $B_1$, and continue with constructing $A_2,B_2,A_3,B_3,\ldots$ in this fashion. Observe that eventually we construct $A_\ell$ and $B_\ell$ without running out of the path $Q$, because $Q$ intersects all the paths in $\Pp$ and we have
	\[|\Pp|=\bound{lem:tripod_or_web}{g}{\ell}=\ell(\ell+p+p\ell).\]

	Note that the sublinkages $\Fkt{\Pp}{A_1},\Fkt{\Pp}{B_1},\ldots,\Fkt{\Pp}{A_\ell},\Fkt{\Pp}{B_\ell}$ are not necessarily disjoint, we only have the cardinality constraints given by~\eqref{eq:kapibara}. In fact, we note that the sublinkages $\Fkt{\Pp}{B_1},\ldots,\Fkt{\Pp}{B_\ell}$ must be almost the same, as formalised below.

	\begin{claim}
		\label{claim:small_symm_diff}
		If there are  distinct $i,j\in [\ell]$ with $\Abs{\Fkt{\Pp}{B_i} \setminus \Fkt{\Pp}{B_j}} \geq \ell$, then there is a tripod fulfilling~\cref{lem:tripod_or_web:tripod}.
	\end{claim}
	\begin{claimproof}
		Choose two distinct paths $P_1, P_2 \in \Fkt{\Pp}{B_i} \setminus \Fkt{\Pp}{B_j}$.
		Assume without loss of generality that the intersection between $P_1$ and $B_i$ that is the first along $P_1$ occurs along $B_i$ before the intersection between $P_2$ and $B_i$ that is the first along $P_2$.
		Then the shortest prefix $P'_1$ of $P_1$ ending in a vertex of $B_i$ together with $P_2$ and the shortest subpath of $B_i$ starting in $\End{P'_1}$ and ending in a vertex of $P_2$ yield a \tripod $R$, see~\cref{fig:tripod_case} for an illustration.

		\begin{figure}[!ht]
			\centering
			\begin{tikzpicture}
				\node (centre) at (0,0) {};
				\node (Q-start) at ($(centre)+(-5,0)$) {};
				\node (Q-end) at ($(centre)+(5,0)$) {};
				\node (Q-start-solid) at ($(Q-start)+(0:1)$) {};
				\node (Q-end-solid) at ($(Q-end)+(180:1)$) {};

				\draw[edge,CornflowerBlue,dashed] (Q-start) to (Q-start-solid);
				\draw[edge,CornflowerBlue,dashed] (Q-end) to (Q-end-solid);
				\draw[edge,CornflowerBlue,name path=Q--1] (Q-start-solid) to (Q-end-solid);

				\node (X-start) at ($(Q-start-solid)+(0.7,0.03)$) {};
				\node (X-end) at ($(X-start)+(2.5,0)$) {};
				\node (Y-start) at ($(X-end)+(1.5,0)$) {};
				\node (Y-end) at ($(Y-start)+(2.5,0)$) {};
				
					\draw[edge,AppleGreen] (X-start.center) to (X-end.center);
					\draw[edge,AppleGreen] (Y-start.center) to (Y-end.center);
					\draw[edge,AppleGreen] ($(X-start)+(90:0.1)$) to ($(X-start)+(270:0.1)$);
					\draw[edge,AppleGreen] ($(X-end)+(90:0.1)$) to ($(X-end)+(270:0.1)$);
					\draw[edge,AppleGreen] ($(Y-start)+(90:0.1)$) to ($(Y-start)+(270:0.1)$);
					\draw[edge,AppleGreen] ($(Y-end)+(90:0.1)$) to ($(Y-end)+(270:0.1)$);

					\node[AppleGreen] (X-label) at ($(X-end)+(240:0.4)$) {$B_i$};
					\node[AppleGreen] (Y-label) at ($(Y-end)+(240:0.4)$) {$B_j$};
					\node[CornflowerBlue] (Q-label) at ($(Q-end)+(60:0.3)$) {$Q$};

					\node (P-1-start) at ($(X-start)+(275:1.7)$) {};
					\node (P-2-start) at ($(P-1-start)+(340:0.6)+(0.5,0)$) {};
					\node (P-2-end) at ($(X-end)+(95:1.7)-(0.5,0)$) {};
					\node (P-1-end) at ($(P-2-end)+(160:0.6)$) {};

					\draw[directededge,dashed,PastelOrange,out=0,in=180,name path=P--1] (P-1-start) to (P-1-end);

					\path [name intersections={of=P--1 and Q--1,by=E}];

					\draw[directededge,dashed,PastelOrange,name path=P--2] (P-2-start) to[quick curve through={($(E)+(1.2,0)$) ($(E)+(1.1,0.7)$) ($(E)+(0.8,0)$) ($(E)+(0.6,-0.7)$) ($(E)+(0.4,0)$) ($(E)+(0.45,0.4)$)}] (P-2-end);

					\path[name path=Q--2] (Q-start-solid) to ($(E)+(0.45,0)$);
					\path [name intersections={of=P--2 and Q--2,by=F}];

					\begin{pgfonlayer}{background}
					\draw[marked,LavenderMagenta!40!white] (E) to (F);
					\draw[marked,LavenderMagenta!40!white,out=0,in=280] (P-1-start) to (E);
					\draw[marked,LavenderMagenta!40!white] (P-2-start) to[quick curve through={($(E)+(1.2,0)$) ($(E)+(1.1,0.7)$) ($(E)+(0.8,0)$) ($(E)+(0.6,-0.7)$) ($(E)+(0.4,0)$) ($(E)+(0.45,0.4)$)}] (P-2-end);
					\end{pgfonlayer}

					\node[PastelOrange] (P-1-label) at ($(P-1-start)+(200:0.3)$) {$P_1$};
					\node[PastelOrange] (P-2-label) at ($(P-2-start)+(200:0.3)$) {$P_2$};
					\node[LavenderMagenta] (R-label) at ($(P-2-end)+(280:0.4)$) {$R$};
			\end{tikzpicture}

			\caption{How two paths $P_1$ and $P_2$ from $\Fkt{\mathcal{P}}{B_i} \setminus \Fkt{\mathcal{P}}{B_j}$ might look and how they yield a \tripod $R$.}
			\label{fig:tripod_case}
		\end{figure}
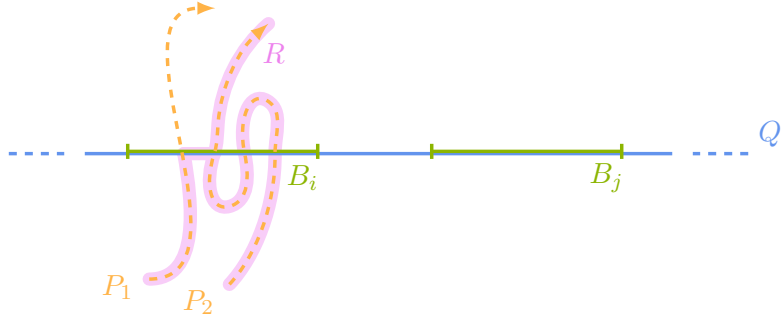

		Next, choose $Q' \coloneqq B_j$ and $\Pp' \coloneqq \Fkt{\Pp}{B_j} \setminus \Fkt{\Pp}{B_i}$. Note that since $\Abs{\Fkt{\Pp}{B_i}}=\Abs{\Fkt{\Pp}{B_j}}=p$, we have \[\Abs{\Pp'}=\Abs{\Fkt{\Pp}{B_j} \setminus \Fkt{\Pp}{B_i}}=\Abs{\Fkt{\Pp}{B_i} \setminus \Fkt{\Pp}{B_j}}\geq \ell.\]
		By construction, both $Q'$ and all paths of $\Pp'$ are disjoint from $R$.
		Thus, $R$, $\Pp'$, and $Q'$ fulfil~\cref{lem:tripod_or_web:tripod}.
	\end{claimproof}

	So we can assume that for all distinct $i,j\in \{1,\ldots,\ell\}$, we have $\Abs{\Fkt{\Pp}{B_i} \setminus \Fkt{\Pp}{B_j}} < \ell$. Denote
	\[\Ll\coloneqq \Fkt{\Pp}{B_1} \cap \cdots \cap \Fkt{\Pp}{B_{\ell}}.\]
	We note the following.

	\begin{claim}
	$\Abs{\Ll} \geq \ell$.
	\end{claim}
	\begin{claimproof}
	 Every element of $\Fkt{\Pp}{B_1}$ that does not belong to $\Ll$ must be contained in $\Fkt{\Pp}{B_1}\setminus \Fkt{\Pp}{B_j}$ for some $j\in \{2,\ldots,\ell\}$. Therefore,
	 \[|\Ll|\geq |\Fkt{\Pp}{B_1}|-\sum_{j=2}^\ell |\Fkt{\Pp}{B_1}\setminus \Fkt{\Pp}{B_j}|\geq p-(\ell-1)\ell=\ell.\qedhere\]
	\end{claimproof}

	Note that $\End{\Ll} \subseteq T$, because $\Ll \subseteq \Pp$. The goal is now to find another linkage of size $\ell$ that starts in $S$ and pairwise intersects with $\Ll$, so that we may apply \cref{lem:web_to_packing}.
	
	To this end, first define
	\[\mathcal{M} \coloneqq \Pp \setminus \left(\Pp(B_1)\cup \ldots\cup \Pp(B_\ell)\right).\]
	We construct an auxiliary \migration digraph $\hat{D}$ consisting of the union of all the paths in $\mathcal{M}$ together with the path $Q$.
	We define \[\Fkt{S}{\hat{D}} \coloneqq \Fkt{S}{D}\qquad\textrm{and}\qquad \Fkt{T}{\hat{D}} \coloneqq \{\End{B_1},\ldots,\End{B_\ell}\}\] for the sources and sinks.

	\begin{claim}\label{cl:linkage-hatD}
	 In $\hat{D}$ there is an $\Fkt{S}{\hat{D}}$-$\Fkt{T}{\hat{D}}$-linkage $\Kk$ of size $\ell$ such that every path of $\Kk$ entirely contains a segment from $\{B_1,\ldots,B_\ell\}$.
	\end{claim}
	\begin{claimproof}
	Towards a contradiction, suppose there is no $\Fkt{S}{\hat{D}}$-$\Fkt{T}{\hat{D}}$-linkage of size $\ell$.
	By Menger's Theorem (\cref{thm:menger}), there exists a set of vertices $F$ of size at most $\ell-1$ such that every  $\Fkt{S}{\hat{D}}$-$\Fkt{T}{\hat{D}}$-path in $\hat{D}$ meets~$F$.
	As $|F|<\ell$, there exists $i \in [\ell]$ such that neither $A_i$ nor $B_i$ is intersected by $F$. Further, we have
	\[|\Fkt{\Pp}{A_i} \cap \mathcal{M}|=|\Fkt{\Pp}{A_i}\setminus \left(\Pp(B_1)\cup \ldots\cup \Pp(B_\ell)\right)|\geq (\ell+p\ell)-p\ell=\ell,\]
	hence there is a path $M\in \mathcal{M}$ that intersects $A_i$ and does not contain a vertex from $F$.
	Thus, the union of $M$, $A_i$, and $B_i$ contains a path from $\Start{M}\in \Fkt{S}{\hat{D}}$ to $\End{B_i} \in \Fkt{T}{\hat{D}}$ that does not contain a vertex from $F$, a contradiction.
	Moreover, since the paths of $\mathcal{M}$ are disjoint from the segments $B_1,\ldots,B_\ell$, every path of $\Kk$ must entirely contain a segment from $\{B_1,\ldots,B_\ell\}$.
	\end{claimproof}

	Let $\Kk$ be the linkage provided by \cref{cl:linkage-hatD}. As $\hat{D}$ is a subgraph of $D$, $\Kk$ is also a linkage in $D$, with start-vertices in $\Fkt{S}{D}$.
	Since every $K \in \Kk$ contains a segment $B_i$, and $B_i$ intersects all the paths of $\Ll$, every path of $\Kk$ intersects every path of $\Ll$. So we may now apply \cref{lem:web_to_packing} to obtain~\cref{lem:tripod_or_web:web}.
\end{proof}

Now we can apply \cref{lem:tripod_or_web} iteratively to derive~\cref{thm:path_crossing_linkage}.

\begin{proof}[Proof of~\cref{thm:path_crossing_linkage}]
	Let $\bound{thm:path_crossing_linkage}{g}{x} \coloneqq
	\bound{lem:tripod_or_web}{g}{}^{k}(\bound{lem:onion_harvesting}{g}{k})$, where $f^{k}$ denotes the $k$-fold composition of a function~$f$.
	We apply~\cref{lem:tripod_or_web} to get the outcome~\cref{lem:tripod_or_web:web} of two pairwise intersecting linkages $\Kk$ and $\Ll$ of order at least $\bound{lem:tripod_or_web}{g}{}^{ k-1}(\bound{lem:onion_harvesting}{g}{k})$ (which can be clearly assumed to be at least $\bound{lem:onion_harvesting}{g}{k}$), or the outcome~\cref{lem:tripod_or_web:tripod} of a \tripod $R_1$ and a remaining linkage $\Pp^1$ of order $\bound{lem:tripod_or_web}{g}{}^{ k-1}(\bound{lem:onion_harvesting}{g}{k})$ and a remaining path $Q^1$.
		
	In case we obtain~\cref{lem:tripod_or_web:web}, then $D$ contains a packing of $k$ \tripods by~\cref{lem:web_to_packing}.
	
	In case we obtain~\cref{lem:tripod_or_web:tripod}, we add the \tripod $R_1$ to a collection of \tripods $\mathcal{R}$ and apply~\cref{lem:tripod_or_web} again on $\Pp^1$ and $Q^1$.
	
	The size of $\Pp$ suffices to repeat this procedure $k$ times.
	So either in one of the applications we obtain~\cref{lem:tripod_or_web:web} and thus a packing of \tripods of size $k$, or we apply~\cref{lem:tripod_or_web} $k$ times in total,  which results in $\mathcal{R}$ becoming a packing of \tripods of size $k$.
\end{proof}

This allows us to infer~\cref{thm:crossing_linkages}.

\begin{proof}[Proof of~\cref{thm:crossing_linkages}]
	We prove the statement for $\bound{thm:crossing_linkages}{g}{k}\coloneqq k(2\bound{thm:path_crossing_linkage}{g}{k}-1).$
	Let $Y \coloneqq \End{\Pp} = \End{\Qq}.$
	If there is a path $Q \in \Qq$ that intersects at least $\bound{thm:path_crossing_linkage}{g}{k}$ paths from $\Pp$, then \cref{thm:path_crossing_linkage} provides a packing of $k$ \tripods in $D$.
	Thus we can assume that every path in $\Qq$ intersects less than $\bound{thm:path_crossing_linkage}{g}{k}$ paths from $\Pp$.
	Similarly, we can assume that every path of $\Pp$ intersects less than $\bound{thm:path_crossing_linkage}{g}{k}$ paths from $\Qq$.
	
	For every $y\in Y$ we write $P^y$ for the unique path of $\Pp$ whose end-vertex is $y$ and $Q^y$ for the unique path of $\Qq$ whose end-vertex is $y$. Construct an auxiliary undirected graph $H$ on the vertex set $Y$ as follows.
	Distinct $y,z\in Y$ are adjacent in $H$ if and only if $P^y$ intersects $Q^z$ or $Q^y$ intersects $P^z$.
	Note that by the observation of the previous paragraph, every vertex of $H$ has degree at most $2\bound{thm:path_crossing_linkage}{g}{k}-2$.
	Hence, $H$ contains an independent set $I$ of size at least $\frac{|V(H)|}{2\bound{thm:path_crossing_linkage}{g}{k}-1}=\frac{|Y|}{2\bound{thm:path_crossing_linkage}{g}{k}-1}=k$.
	
	Observe that for every $y\in I$, the subgraph $P^y \cup Q^y$ contains a \tripod.
	Additionally all those \tripods are pairwise disjoint, because $I$ is independent in $H$. This yields a packing of $k$ \tripods.
\end{proof}

\section{Main proof}\label{sec:mainProof}

With~\cref{thm:crossing_linkages} established, we  proceed with the main proof. First, we need a few definitions.


Let $D$ be a \migration digraph. For a subset of sources $X\subseteq S$ and a subset of sinks $Y\subseteq T$, we shall say that $Y$ is \emph{linkable from} $X$ if there exists an $X$-$Y$-linkage of size $|Y|$, that is, with every vertex of $Y$ being the end-vertex of some path in the linkage.
The next~\namecref{lem:no_large_linkable}, stating that unless we can find a large packing of \tripods, there is no large subset of sinks that is simultaneously linkable from two disjoint subsets of sources, is a direct consequence of \cref{thm:crossing_linkages}.

\begin{corollary}\label{lem:no_large_linkable}
 Let $k$ be a positive integer, $D$ be a \migration digraph with $\packing(D)<k$, $(S_1,S_2)$ be a partition of $S$, and $Y\subseteq T$ be a set linkable from both $S_1$ and from $S_2$.
 Then $|Y|\leq \bound{thm:crossing_linkages}{g}{k}$.
\end{corollary}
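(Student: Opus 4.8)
The plan is to derive this as an immediate contrapositive of \cref{thm:crossing_linkages}. Suppose, towards a contradiction, that $|Y| \geq \bound{thm:crossing_linkages}{g}{k}$. Since $Y$ is linkable from $S_1$, by definition there is an $S_1$-$Y$-linkage $\Pp$ with every vertex of $Y$ an end-vertex of some path, i.e.\ $\End{\Pp} = Y$ and $|\Pp| = |Y|$. Likewise, linkability from $S_2$ yields an $S_2$-$Y$-linkage $\Qq$ with $\End{\Qq} = Y$ and $|\Qq| = |Y|$. As $S_1, S_2 \subseteq S$ and $Y \subseteq T$, both $\Pp$ and $\Qq$ are $S$-$T$-linkages, and both have size at least $\bound{thm:crossing_linkages}{g}{k}$.

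It then remains to verify the two structural hypotheses of \cref{thm:crossing_linkages}. First, $\End{\Pp} = Y = \End{\Qq}$, so the linkages share their set of end-vertices. Second, $\Start{\Pp} \subseteq S_1$ and $\Start{\Qq} \subseteq S_2$, and since $(S_1, S_2)$ is a partition of $S$ we have $S_1 \cap S_2 = \emptyset$, whence $\Start{\Pp} \cap \Start{\Qq} = \emptyset$. Thus \cref{thm:crossing_linkages} applies and produces a packing of $k$ \tripods in $D$, so $\packing(D) \geq k$, contradicting the assumption $\packing(D) < k$. Therefore $|Y| \leq \bound{thm:crossing_linkages}{g}{k} - 1 \leq \bound{thm:crossing_linkages}{g}{k}$, as claimed.

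I do not expect any real obstacle here: the proof is a short unfolding of the definition of ``linkable from'' followed by a single invocation of \cref{thm:crossing_linkages}. The only points that deserve a line of care are that the two linkages indeed have disjoint start-sets (ensured by disjointness of $S_1$ and $S_2$) and a common end-set (which is exactly $Y$), and that their common size $|Y|$ meets the threshold $\bound{thm:crossing_linkages}{g}{k}$ under the contradiction hypothesis.
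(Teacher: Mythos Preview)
Your argument is correct and is exactly the intended one: the paper states this corollary as a direct consequence of \cref{thm:crossing_linkages} without giving a separate proof, and your unfolding of the definition of ``linkable from'' together with the observation that $\Start{\Pp}\subseteq S_1$ and $\Start{\Qq}\subseteq S_2$ are disjoint is precisely what is meant. The final inequality chain is slightly redundant (you actually prove the stronger $|Y|<\bound{thm:crossing_linkages}{g}{k}$), but this is harmless.
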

%
%

The next~\namecref{lem:cut_partition} is a crucial step in the proof.
We show that if $D$ is a \migration digraph with no large packing of \tripods, then every partition $(S_1,S_2)$ of the sources can be ``projected'' to a partition $(T_1,T_2)$ of the sinks so that all $S_1$-$T_2$-paths and $S_2$-$T_1$-paths can be hit by a small hitting set.

\begin{lemma}\label{lem:cut_partition}
 Let $k$ be a positive integer, $D$ be a migration digraph with $\packing(D) < k$ and $(S_1,S_2)$ be a partition of $S$. Then there exists a partition $(T_1,T_2)$ of $T$ and a vertex subset $F\subseteq V(D)$ with $|F|\leq \bound{thm:crossing_linkages}{g}{k}$ such that $F$ meets all $S_1$-$T_2$-paths and all $S_2$-$T_1$-paths.
\end{lemma}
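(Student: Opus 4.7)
The plan is to invoke Edmonds' Matroid Intersection Theorem on a pair of strict gammoids on the ground set $T$. Recall that for a source set $S_0 \subseteq V(D)$, the collection of subsets $Y \subseteq T$ that are linkable from $S_0$ forms the independent sets of a matroid, called a \emph{strict gammoid}; this is classical in matroid theory. Accordingly, define two matroids $M_1$ and $M_2$ on $T$ by declaring $Y \subseteq T$ to be independent in $M_i$ exactly when $Y$ is linkable from $S_i$. Let $r_1, r_2$ denote the respective rank functions. By Menger's Theorem (\cref{thm:menger}), $r_i(Y)$ equals both the maximum size of an $S_i$-$Y$-linkage and the minimum size of a vertex set meeting every $S_i$-$Y$-path.

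A subset $I \subseteq T$ that is independent in both $M_1$ and $M_2$ is by definition linkable from both $S_1$ and $S_2$; hence, by \cref{lem:no_large_linkable}, any such common independent set has size at most $\bound{thm:crossing_linkages}{g}{k}$. Edmonds' Matroid Intersection Theorem asserts that the maximum size of a common independent set of $M_1$ and $M_2$ equals
\[
\min_{A \subseteq T}\, \bigl( r_1(A) + r_2(T \setminus A) \bigr).
\]
Consequently, there exists $A \subseteq T$ such that $r_1(A) + r_2(T \setminus A) \leq \bound{thm:crossing_linkages}{g}{k}$.

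Set $T_2 \coloneqq A$ and $T_1 \coloneqq T \setminus A$; this gives the desired partition of $T$. Applying Menger's Theorem once more, there exists a vertex set $F_1$ of size $r_1(T_2)$ meeting every $S_1$-$T_2$-path, and a vertex set $F_2$ of size $r_2(T_1)$ meeting every $S_2$-$T_1$-path. Taking $F \coloneqq F_1 \cup F_2$, we obtain $|F| \leq r_1(T_2) + r_2(T_1) \leq \bound{thm:crossing_linkages}{g}{k}$, and $F$ meets every $S_1$-$T_2$-path and every $S_2$-$T_1$-path, concluding the proof. The only genuinely non-routine element is the observation that the two relevant notions — linkage matroids and the packing-to-linkage bound of \cref{lem:no_large_linkable} — combine cleanly via matroid intersection; verifying the gammoid axioms and identifying ranks with Menger cuts are both standard.
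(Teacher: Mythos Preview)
Your proof is correct and follows essentially the same route as the paper: define the two gammoids on $T$ induced by linkability from $S_1$ and $S_2$, bound the maximum common independent set via \cref{lem:no_large_linkable}, apply the Matroid Intersection Theorem to obtain a partition of $T$ with small total rank, and then convert the two ranks into hitting sets via Menger's Theorem. The only cosmetic difference is that you phrase matroid intersection as a minimum over subsets $A\subseteq T$ rather than over partitions, which is of course equivalent.
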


For the proof of~\cref{lem:cut_partition}, we need a few definitions connected to matroids.
Recall that a \emph{matroid} consists of a \emph{ground set} $U$ (which in this work is always finite) and a family $\cal I$ of subsets of $U$, called \emph{independent sets}, satisfying the following axioms:
\begin{itemize}
 \item $\emptyset\in \cal I$;
 \item if $A\subseteq B$ and $B\in \cal I$, then also $A\in \cal I$; and
 \item if $A,B\in \cal I$ and $|A|<|B|$, then there exists $b\in B\setminus A$ such that also $A\cup \{b\}\in \cal I$.
\end{itemize}
We use a particular type of matroids called \emph{gammoids}, which arise from linkable sets in \migration digraphs.

\begin{theorem}[Perfect~\cite{perfect1968}]\label{thm:gammoid}
 Let $D$ be a \migration digraph. Then the family of subsets of $T$ that are linkable from $S$ forms a matroid with ground set $T$.
\end{theorem}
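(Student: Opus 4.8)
The plan is to verify that $\mathcal{I} := \{Y \subseteq T : Y \text{ is linkable from } S\}$ satisfies the three matroid axioms. The axiom $\emptyset \in \mathcal{I}$ is witnessed by the empty linkage. For downward closure, suppose $A \subseteq B$ and $\mathcal{L}$ is an $S$-$B$-linkage of size $|B|$; since the paths of $\mathcal{L}$ are pairwise vertex-disjoint, every vertex of $B$ is the end-vertex of exactly one path of $\mathcal{L}$ and lies on no other path of $\mathcal{L}$, so the subfamily of those paths whose end-vertex lies in $A$ is an $S$-$A$-linkage of size $|A|$, certifying $A \in \mathcal{I}$.

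The exchange axiom is the substantive step. Let $B_1, B_2 \in \mathcal{I}$ with $|B_1| < |B_2|$, and fix an $S$-$B_1$-linkage $\mathcal{L}_1$ and an $S$-$B_2$-linkage $\mathcal{L}_2$, chosen among all such pairs so that the number of edges used by exactly one of the two linkages is minimum. I would first pass to the \emph{split digraph} $\hat D$, in which each vertex $v$ becomes an arc $v^- \to v^+$ and each arc $(u,v)$ of $D$ becomes $(u^+, v^-)$; this turns vertex-disjointness of paths into ordinary disjointness, so that an augmenting-path argument in the style of the proof that transversal matroids are matroids can be run. One forms the auxiliary digraph obtained from $\hat D$ by reversing the arcs traversed by $\mathcal{L}_1$ but not $\mathcal{L}_2$, keeping those traversed by $\mathcal{L}_2$ but not $\mathcal{L}_1$, and discarding those used by both; the minimality of the chosen pair makes this arc set a disjoint union of directed paths and cycles, and a count of the path-endpoints — which lie among the vertices $b^+$ for $b \in B_1 \triangle B_2$ and the source-vertices $s^-$ used by exactly one linkage, exactly mirroring the analysis of $M_1 \triangle M_2$ in the bipartite-matching argument — yields, since $|B_2 \setminus B_1| > |B_1 \setminus B_2|$, some $b \in B_2 \setminus B_1$ together with a directed path $W$ in this structure running from a source $s^-$ with $s$ not used by $\mathcal{L}_1$ to $b^+$. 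Replacing the arc set of $\mathcal{L}_1$ by its symmetric difference with that of $W$ then produces an $S$-$(B_1 \cup \{b\})$-linkage in $\hat D$, and hence in $D$, so $B_1 \cup \{b\} \in \mathcal{I}$.

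The main obstacle is the exchange axiom, and within it the fact that one exchanges vertex-disjoint \emph{paths} rather than single edges: the symmetric-difference structure that makes the bipartite-matching proof run smoothly is not automatically a union of paths and cycles, and it is precisely the minimal choice of $(\mathcal{L}_1, \mathcal{L}_2)$ that repairs this and also guarantees that after rerouting one still covers all of $B_1$ together with the new sink. An alternative that avoids explicit rerouting is to show that the function $r$ assigning to $Y \subseteq T$ the maximum size of an $S$-$Y$-linkage satisfies $r(\emptyset)=0$, $r(Y) \le r(Y \cup \{e\}) \le r(Y)+1$, and submodularity; the last property follows by identifying $r(Y)$ with the minimum size of an $S$-$Y$-separator via Menger's Theorem (\cref{thm:menger}) and then invoking the standard uncrossing argument for minimum vertex separators. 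Since $\mathcal{I} = \{Y \subseteq T : r(Y) = |Y|\}$, this exhibits $r$ as the rank function of the claimed matroid.
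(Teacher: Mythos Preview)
The paper does not give a proof of \cref{thm:gammoid}; it is quoted as a classical result of Perfect and used as a black box in the proof of \cref{lem:cut_partition}. There is therefore no ``paper's own proof'' to compare your attempt against.

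That said, your sketch is essentially sound, and both routes you outline are standard. Two remarks on route~(a). First, the minimality hypothesis on the pair $(\mathcal{L}_1,\mathcal{L}_2)$ is not actually needed: once you pass to the split digraph $\hat D$, each linkage becomes a family of \emph{vertex}-disjoint paths in $\hat D$, so at every vertex of $\hat D$ each of $\mathcal{L}_1,\mathcal{L}_2$ contributes in- and out-degree at most~$1$; it follows directly that the auxiliary digraph (forward $\mathcal{L}_2\setminus\mathcal{L}_1$-arcs together with reversed $\mathcal{L}_1\setminus\mathcal{L}_2$-arcs) already has all degrees at most~$1$, hence decomposes into vertex-disjoint paths and cycles with no extremal choice required. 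Second, the counting step deserves one more sentence: the sinks of the auxiliary paths lying among source-vertices number $|\Start{\mathcal{L}_1}\setminus\Start{\mathcal{L}_2}|$, which is strictly smaller than $|\Start{\mathcal{L}_2}\setminus\Start{\mathcal{L}_1}|$, so at least one auxiliary path starting at a fresh source must terminate in $(B_2\setminus B_1)^{+}$; that is the path $W$ you want. Route~(b) is the cleanest to make rigorous: passing to $\hat D$ identifies $r(Y)$ with a minimum edge-cut value, and the submodularity inequality $r(Y_1)+r(Y_2)\geq r(Y_1\cup Y_2)+r(Y_1\cap Y_2)$ then drops out of the standard uncrossing $A_1,A_2\mapsto A_1\cap A_2,\ A_1\cup A_2$ of source-side sets together with submodularity of the directed cut function $A\mapsto |\delta^{+}(A)|$.
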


Recall that if $M=(U,\cal I)$ is a matroid and $X\subseteq U$, then the \emph{rank} of $X$, denoted $\rk_M(X)$, is the largest size of a subset of $X$ that is independent in $M$ (that is, belongs to $\cal I$). The key to the proof of \cref{lem:cut_partition} is the following tight duality theorem, which characterises the maximum cardinality of a set that is simultaneously independent in two matroids with the same ground~set.

\begin{theorem}[Matroid Intersection Theorem~\cite{edmonds1970}]
	\label{thm:matroid_interesection}
 Let $M_1=(U,{\cal I}_1)$ and $M_2=(U,{\cal I}_2)$ be two matroids with the same ground set $U$. Then
 \[\max\,\{\,|I|\colon I\in {\cal I}_1\cap {\cal I}_2\,\}=\min\,\{\,\rk_{M_1}(X)+\rk_{M_2}(Y)\colon (X,Y)\textrm{ is a partition of }U\,\}.\]
\end{theorem}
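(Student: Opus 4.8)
The statement to prove is \cref{lem:cut_partition}: given a migration digraph $D$ with $\packing(D)<k$ and a partition $(S_1,S_2)$ of $S$, produce a partition $(T_1,T_2)$ of $T$ and a hitting set $F$ with $|F|\leq \bound{thm:crossing_linkages}{g}{k}$ meeting all $S_1$-$T_2$-paths and all $S_2$-$T_1$-paths. The idea is to apply the Matroid Intersection Theorem to the two gammoids on $T$ arising from $S_1$ and from $S_2$, respectively, and to turn the rank terms in its min-side into hitting sets via Menger's Theorem. Concretely, for $i\in\{1,2\}$ let $M_i=(T,\mathcal I_i)$ be the matroid whose independent sets are the subsets of $T$ linkable from $S_i$; by \cref{thm:gammoid} this is indeed a matroid. (Here I use the migration digraph with the \emph{same} vertex set and sinks $T$, but with sources $S_i$.)

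\textbf{Key steps.} First, bound the max-side: any $I\in\mathcal I_1\cap\mathcal I_2$ is a set of sinks linkable from both $S_1$ and $S_2$, so \cref{lem:no_large_linkable} gives $|I|\leq \bound{thm:crossing_linkages}{g}{k}$; hence $\max\{|I|\colon I\in\mathcal I_1\cap\mathcal I_2\}\leq \bound{thm:crossing_linkages}{g}{k}$. By \cref{thm:matroid_interesection}, there is a partition $(T_2,T_1)$ of $T$ with $\rk_{M_1}(T_2)+\rk_{M_2}(T_1)\leq \bound{thm:crossing_linkages}{g}{k}$. (I label the part whose $M_1$-rank is small as $T_2$, and the part whose $M_2$-rank is small as $T_1$; the reason for this labelling becomes clear in the next step.) Second, interpret the two rank terms. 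The rank $\rk_{M_1}(T_2)$ is the largest size of a subset of $T_2$ linkable from $S_1$, which by the definition of linkable is exactly the maximum size of an $S_1$-$T_2$-linkage; by Menger's Theorem (\cref{thm:menger}) this equals the minimum size of a vertex set $F_1$ meeting all $S_1$-$T_2$-paths. Symmetrically, $\rk_{M_2}(T_1)$ equals the minimum size of a vertex set $F_2$ meeting all $S_2$-$T_1$-paths. Third, set $F\coloneqq F_1\cup F_2$. Then $F$ meets every $S_1$-$T_2$-path and every $S_2$-$T_1$-path, and $|F|\leq |F_1|+|F_2|=\rk_{M_1}(T_2)+\rk_{M_2}(T_1)\leq \bound{thm:crossing_linkages}{g}{k}$, as required.

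\textbf{Main obstacle.} The only genuinely delicate point is the identification, for the matroid $M_i$ of \cref{thm:gammoid}, of $\rk_{M_i}(Z)$ for $Z\subseteq T$ with the maximum size of an $S_i$-$Z$-linkage. This needs a short argument: a subset $I\subseteq Z$ is independent in $M_i$ iff $I$ is linkable from $S_i$, i.e.\ iff there is an $S_i$-$I$-linkage of size $|I|$, i.e.\ iff there is an $S_i$-$Z$-linkage whose set of endpoints is exactly $I$; taking $I$ of maximum size and noting that any $S_i$-$Z$-linkage's endpoint set is an independent subset of $Z$, the two maxima coincide. Once this is in place, Menger's Theorem applied with $A=S_i$, $B=Z$ converts the linkage bound into the hitting-set bound, and everything else is bookkeeping. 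One should also note explicitly that changing the source set of $D$ from $S$ to $S_i$ does not affect the notion of $S_i$-$Z$-path or the application of Menger, so no subtlety is hidden there.
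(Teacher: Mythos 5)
Your proposal does not actually address the statement you were asked to prove. The statement is the \emph{Matroid Intersection Theorem} itself (\cref{thm:matroid_interesection}), a classical result of Edmonds which the paper cites without proof. What you have written instead is a proof of \cref{lem:cut_partition}, i.e.\ an \emph{application} of the Matroid Intersection Theorem: you invoke \cref{thm:matroid_interesection} as a black box in your ``Key steps'' to obtain the partition $(T_2,T_1)$ with $\rk_{M_1}(T_2)+\rk_{M_2}(T_1)\le \bound{thm:crossing_linkages}{g}{k}$, and then combine it with \cref{thm:gammoid}, \cref{lem:no_large_linkable}, and Menger's Theorem. That is not a proof of the min--max identity
\[\max\,\{\,|I|\colon I\in {\cal I}_1\cap {\cal I}_2\,\}=\min\,\{\,\rk_{M_1}(X)+\rk_{M_2}(Y)\colon (X,Y)\textrm{ is a partition of }U\,\}.\]
A genuine proof of that identity would need the standard matroid machinery: the easy inequality $\le$ follows since any common independent set $I$ splits as $(I\cap X)\cup(I\cap Y)$ with $|I\cap X|\le \rk_{M_1}(X)$ and $|I\cap Y|\le \rk_{M_2}(Y)$, and the hard direction $\ge$ is typically established via augmenting paths in an exchange digraph, or inductively via matroid union/rank formula. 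None of that appears in your writeup.

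As a side remark, the argument you \emph{did} give for \cref{lem:cut_partition} is correct and essentially identical to the paper's own proof of that lemma (the one point you elaborate on --- identifying $\rk_{M_i}(Z)$ with the maximum size of an $S_i$-$Z$-linkage --- is a worthwhile clarification the paper states without justification). But since the task was to prove the Matroid Intersection Theorem, the proposal misses the target entirely.
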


We may now prove~\cref{lem:cut_partition}.

\begin{proof}[Proof of~\cref{lem:cut_partition}]
 Let $M_1$ and $M_2$ be the matroids with ground set $T$ that are composed of sets linkable from $S_1$, and from $S_2$ respectively; these are indeed matroids by~\cref{thm:gammoid}.
 Let $\ell$ be the largest size of a set that is linkable both from $S_1$ and from $S_2$; or equivalently, that is independent both in $M_1$ and in $M_2$.
 By~\cref{lem:no_large_linkable}, we have $\ell\leq \bound{thm:crossing_linkages}{g}{k}$.

 By~\cref{thm:matroid_interesection}, there is a partition $(T_2,T_1)$ of $T$ such that
 \[\rk_{M_1}(T_2)+\rk_{M_2}(T_1)=\ell.\]
 Note that $\rk_{M_1}(T_2)$ is equal to the maximum size of an $S_1$-$T_2$-linkage.
 By Menger's Theorem (\cref{thm:menger}), there is a vertex subset $F_1$ of size $\rk_{M_1}(T_2)$ that meets all $S_1$-$T_2$-paths.
 Similarly, there is a vertex subset $F_2$ of size $\rk_{M_2}(T_1)$ that meets all $S_2$-$T_1$-paths.
 Now $F\coloneqq F_1\cup F_2$ is a set of size at most $\rk_{M_1}(T_2)+\rk_{M_2}(T_1)=\ell\leq \bound{thm:crossing_linkages}{g}{k}$ that meets all $S_1$-$T_2$-paths and all $S_2$-$T_1$-paths.
\end{proof}

Finally, we have gathered all the tools needed for the proof of our main result.

\begin{proof}[Proof of~\cref{thm:main}]
 We prove the statement for function $\bound{thm:main}{f}{}$ defined inductively as follows:
 \begin{align*}
  \bound{thm:main}{f}{1} & = 0, \\
  \bound{thm:main}{f}{k} & = 2\bound{thm:main}{f}{k-1} + 2\bound{thm:crossing_linkages}{g}{k}\qquad \textrm{for }k\geq 2.
 \end{align*}
 The proof is by induction on $k$. For $k=1$ there is nothing to prove: either there is at least one \tripod and we have the first outcome, or there are no \tripods and we have the second outcome by taking an empty set. So assume $k\geq 2$. We may also assume that $\packing(D)<k$, for otherwise we are done.

 For every partition $(S_1,S_2)$ of $S$, we fix some partition $(T_1,T_2)$ of $T$ and a set $F\subseteq V(G)$ of size at most $\bound{thm:crossing_linkages}{g}{k}$ that is provided for $(S_1,S_2)$ by~\cref{lem:cut_partition}.
 Further, we define \migration digraphs $D_1$ and $D_2$ as follows: for $t\in \{1,2\}$, $D_t$ is obtained from $D$ by deleting the vertices of $F$ and restricting the sources and sinks to $S_t\setminus F$, and $T_t\setminus F$ respectively.
 We say that the partition $(T_1,T_2)$, the vertex subset $F$, and the digraphs $D_1,D_2$ are the objects \emph{associated} with the partition~$(S_1,S_2)$.

 Call a partition $(S_1,S_2)$ of $S$ \emph{splendid} if the following condition holds: if $D_1,D_2$ are the \migration digraphs associated with $(S_1,S_2)$, then
 \[\packing(D_1)>0\qquad\textrm{and}\qquad \packing(D_2)>0.\]
 We argue that if $(S_1,S_2)$ is splendid, then also
 \[\packing(D_1)<k-1\qquad\textrm{and}\qquad \packing(D_2)<k-1.\]
 Let $(T_1,T_2)$ and $F$ be the partition of $T$ and the vertex subset associated with $(S_1,S_2)$.
 Suppose for contradiction that $\packing(D_1)\geq k-1$, hence there is a packing $\cal R$ of $k-1$ \tripods in~$D_1$. As $(S_1,S_2)$ is splendid, we have $\packing(D_2)>0$, so there is also a \tripod in $D_2$, say $R$. Since $F$ meets all $S_1$-$T_2$-paths and all $S_2$-$T_1$-paths, and the vertices of $F$ are deleted both in $D_1$ and in $D_2$, we infer that $R$ is disjoint from all the \tripods in $\cal R$. Hence ${\cal R}\cup \{R\}$ is a packing of $k$ \tripods in~$D$, a contradiction with the assumption $\packing(D)<k$. The argument for the bound $\packing(D_2)<k-1$ is symmetric.

 Hence, if there exists a splendid partition $(S_1,S_2)$ of $S$, say with associated objects $(T_1,T_2)$,~$F$, and $D_1,D_2$, then we may apply induction as follows. Since $\packing(D_1)\leq k-1$, by induction we may find a vertex subset $K_1$ with $|K_1|\leq \bound{thm:main}{f}{k-1}$ that meets all \tripods in $D_1$. Similarly, we find a vertex subset $K_2$ with $|K_2|\leq \bound{thm:main}{f}{k-1}$ that meets all \tripods in $D_2$. It now suffices to~take
 \[K\coloneqq K_1\cup K_2\cup F\]
 and observe that
 \[|K|\leq 2\bound{thm:main}{f}{k-1}+\bound{thm:crossing_linkages}{g}{k}\leq \bound{thm:main}{f}{k}\]
 and that $K$ meets all the \tripods in $D$.

 Therefore, we may assume that there are no splendid partitions of $S$. Hence, for every partition $(S_1,S_2)$ of $S$, say with associated digraphs $D_1,D_2$, we have $\packing(D_1)=0$ or $\packing(D_2)=0$. If the former holds then call $S_1$ an \emph{empty side} of $(S_1,S_2)$, and if the latter holds then call $S_2$ an empty side. Note that it may happen that both sides of a partition are empty, but every partition of $S$ has at least one empty side. We also have the following.

 \begin{claim}\label{cl:empty}
  Let $(S_1,S_2)$ be a partition of $S$ with an empty side $S_1$. Let $F$ be the vertex subset associated with $(S_1,S_2)$. Then $F$ meets all \tripods in $D$ that have at least one source in~$S_1$.
 \end{claim}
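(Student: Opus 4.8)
The plan is to argue by contradiction. Suppose some tripod $R$ in $D$ has a source in $S_1$ yet is vertex-disjoint from $F$. Write $s_1,s_2$ for the two sources of $R$, $t$ for its sink, $c$ for its centre, and let $P_1,P_2,P_3$ be the $s_1$-$c$-path, the $s_2$-$c$-path, and the $c$-$t$-path of $R$ respectively; recall these are pairwise vertex-disjoint apart from all passing through $c$. After relabelling we may assume $s_1\in S_1$. Recall that the partition $(T_1,T_2)$ of $T$ and the set $F$ are the objects associated with $(S_1,S_2)$, so $F$ meets every $S_1$-$T_2$-path and every $S_2$-$T_1$-path in $D$; and that the digraph $D_1$ associated with $(S_1,S_2)$ is obtained from $D$ by deleting $F$ and declaring $S_1\setminus F$ to be its sources and $T_1\setminus F$ its sinks. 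Since $S_1$ is an empty side of $(S_1,S_2)$, we have $\packing(D_1)=0$, i.e.\ $D_1$ contains no tripod at all.

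The key observation is that, because $P_1$ and $P_3$ share only the vertex $c$, their concatenation is a genuine (simple) directed $s_1$-$t$-path contained in $R$; similarly, $P_2$ followed by $P_3$ is a directed $s_2$-$t$-path contained in $R$. Both of these paths avoid $F$, since $R$ does. I would then distinguish three cases according to where $t$ and $s_2$ lie. If $t\in T_2$, the $s_1$-$t$-path just described is an $S_1$-$T_2$-path avoiding $F$, contradicting the fact that $F$ hits all $S_1$-$T_2$-paths. If $t\in T_1$ and $s_2\in S_2$, the $s_2$-$t$-path is an $S_2$-$T_1$-path avoiding $F$, again a contradiction. Finally, if $t\in T_1$ and $s_2\in S_1$, then every vertex of $R$ lies outside $F$, both sources of $R$ lie in $S_1\setminus F=S(D_1)$, and its sink lies in $T_1\setminus F=T(D_1)$; hence $R$ is itself a tripod in $D_1$, contradicting $\packing(D_1)=0$. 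Since $s_1\in S_1$, one of these three cases always applies, so the contradiction is complete.

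I do not expect a genuine obstacle here: the claim is a short consequence of \cref{lem:cut_partition} together with the definitions of the associated objects and of an empty side. The only points needing a word of care are that the concatenations above are simple directed paths --- which is exactly the ``disjoint apart from sharing $c$'' clause of the tripod definition, and which remains valid even if one or more of $P_1,P_2,P_3$ has length $0$ --- and that, under the contradiction hypothesis, deleting $F$ from $D$ removes no vertex of $R$, so $R$ survives into $D-F$ and, in the last case, into $D_1$.
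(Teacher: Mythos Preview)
Your proof is correct and follows essentially the same approach as the paper's own proof: both argue by the same three-case split on whether $t\in T_2$, or $t\in T_1$ with $s_2\in S_2$, or $t\in T_1$ with $s_2\in S_1$, invoking respectively the $S_1$-$T_2$-hitting property of $F$, the $S_2$-$T_1$-hitting property of $F$, and $\packing(D_1)=0$. The only cosmetic difference is that you frame the whole argument as a proof by contradiction and spell out why the concatenated paths are simple, whereas the paper handles each case directly.
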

 \begin{claimproof}
  Let $D_1,D_2$ be the digraphs associated with $(S_1,S_2)$. We have $\packing(D_1)=0$ by the assumption that the side $S_1$ is empty, so there are no \tripods in $D_1$.

  Let $R$ be a \tripod in $D$, say with sources $s_1,s_2$ and sink $t$. Suppose one of the sources of $R$ belongs to $S_1$, say $s_1$. If $t\in T_2$, then $R$ contains an $S_1$-$T_2$-path and is therefore intersected by $F$. So assume $t\in T_1$. Now, if $s_2\in S_2$, then $R$ contains an $S_2$-$T_1$-path and is therefore intersected by $F$. So assume $s_2\in S_1$ as well. Finally, we conclude that $R$ must be intersected by $F$, because otherwise it would be a \tripod in $D_1$.
 \end{claimproof}

 Now, among all partitions of $S$ pick a partition $(S_1,S_2)$ with a maximum size empty side, say~$S_1$. Let $(T_1,T_2)$, $F$, and $D_1,D_2$ be the objects associated with $(S_1,S_2)$. If $S_1=S$, then by \cref{cl:empty} we have that $F$ meets all the \tripods in $D$, and also $|F|\leq \bound{thm:crossing_linkages}{g}{k}\leq \bound{thm:main}{f}{k}$. Hence, we may assume that there is a vertex $s\in S\setminus S_1$. Consider the partition
 \[(S_1',S_2')\coloneqq (S_1\cup \{s\},S_2\setminus \{s\}).\]
 By the choice of $S_1$ as an empty side with maximum cardinality, we infer that in $(S_1',S_2')$, the side $S_2'$ is empty.

 Since the side $S_1$ of $(S_1,S_2)$ is empty, by~\cref{cl:empty}, we infer that $F$ intersects all the \tripods in $D$ with at least one source in $S_1$. Analogously, the vertex subset $F'$ associated with $(S_1',S_2')$ intersects all the \tripods in $D$ with at least one source in $S_2'$. However, we have
 \[S_1\cup S_2'=S\setminus \{s\},\]
 hence every \tripod in $D$ has a source in $S_1$ or a source in $S_2'$. We conclude that
 \[K\coloneqq F\cup F'\]
 is a vertex subset that meets all \tripods in $D$, and we have
 \[|K|\leq 2\bound{thm:crossing_linkages}{g}{k}\leq \bound{thm:main}{f}{k},\]
 as required.
\end{proof}

\section{Possible extensions}

As discussed in~\cref{sec:intro}, there are very few \EP-type results
concerning acyclic substructures. In this short section we describe a plausible class of acyclic
patterns for which \EP property may hold.

Let \(D\) be a \migration digraph, and let \(k\) be a positive integer.
A \emph{\(k\)-arborescence} in \(D\) is a subgraph of \(D\) which
is an in-arborescence rooted at a vertex \(r \in T(D)\) containing
\(k\) distinct elements of \(S(D)\).
Note that while not every \(2\)-arborescence is a tripod,
every \(2\)-arborescence contains a tripod, and every tripod is a (minimal) \(2\)-arborescence.
Similarly, every \(1\)-arborescence contains an \(S\)-\(T\)-path,
and every \(S\)-\(T\)-path is a (minimal) \(1\)-arborescence.

In these terms, \cref{thm:main} is equivalent to the statement
that \(2\)-arborescences have the \EP property, and Menger's Theorem implies
that \(1\)-arborescences have the \EP property.
This suggests the following conjecture.

\begin{conjecture}\label{conj:arb}
  There exists a function \(\cbound{f}{} \colon \N \times \N \to \N\) such that for every
  \migration digraph \(D\) and positive integers \(k\) and \(\ell\) either
  \begin{itemize}
    \item there are \(\ell\) pairwise vertex-disjoint \(k\)-arborescences in \(D\), or
    \item there exists a subset of at most \(\cbound{f}{k, \ell}\) vertices
      that intersects every \(k\)-arborescence in \(D\).
  \end{itemize}
\end{conjecture}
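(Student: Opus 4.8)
The natural plan to attack \cref{conj:arb} is to reprove the four-step scheme behind \cref{thm:main} with the width $k$ held fixed and the outer induction on the packing number $\ell$. A convenient first move is the reformulation of the hitting side: a vertex set $F$ meets every $k$-arborescence of $D$ precisely when, in $D-F$, every sink is reachable from at most $k-1$ sources. Indeed, if some sink $r$ of $D-F$ is reachable from $k$ sources, a shortest-path in-branching towards $r$ inside the union of the corresponding paths is a $k$-arborescence; the converse is immediate. Note that Menger's Theorem is the $k=1$ instance of the conjecture and \cref{thm:main} the $k=2$ instance.

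The two ``engine'' lemmas should generalise. For the analogue of \cref{lem:web_to_packing} one wants: if $(S_1,\dots,S_k)$ is a partition of $S$ and there are linkages $\Kk_1,\dots,\Kk_k$ of size $\Omega_{k,\ell}(1)$ with $\Start{\Kk_i}\subseteq S_i$ and $\End{\Kk_1}=\dots=\End{\Kk_k}\subseteq T$, then $D$ contains $\ell$ vertex-disjoint $k$-arborescences. The vertex-splitting construction of \cref{lem:web_to_packing} should reduce this to a ``spider-harvesting'' strengthening of \cref{lem:onion_harvesting}, in which the onions become $(k{+}1)$-legged spiders through a common centre (one outgoing leg, $k$ incoming legs); I would prove it either by adapting the immersion argument of \cite{bozyk2022digraphs} or by induction on $k$, attaching one extra leg at a time. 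A Ramsey step parallel to \cref{sec:ramsey} then relaxes the equality of end-sets to pairwise intersection, yielding in particular the analogue of \cref{lem:no_large_linkable}: if $D$ has no packing of $\ell$ $k$-arborescences, then no $\Omega_{k,\ell}(1)$-sized set of sinks is simultaneously linkable from each of $S_1,\dots,S_k$.

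The real difficulty is the cut-partition step and the recursion built on it. For $k=2$ the cut lemma (\cref{lem:cut_partition}) falls straight out of the Matroid Intersection Theorem (\cref{thm:matroid_interesection}) applied to the two gammoids (\cref{thm:gammoid}) of sinks linkable from $S_1$ and from $S_2$. For $k\geq 3$ two new issues arise. First, the natural target quantity --- the largest set of sinks simultaneously linkable from $k$ pairwise disjoint source sets --- is the size of a largest common independent set of $k$ gammoids, and intersection of three or more matroids admits no exact min--max (and is \textsf{NP}-hard in general); at best one could try to exploit the special structure of gammoids arising from disjoint source sets in one fixed digraph, or settle for an approximate separator obtained by iterating two-matroid intersection $k-1$ times. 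Second, and more seriously, a small separator of the kind that \cref{lem:cut_partition} produces need not exist at all once $k\geq 3$: a digraph can be very rich in $(k-1)$-arborescence structure --- which the width-$k$ framework would try, pointlessly and expensively, to tear apart --- while containing no $k$-arborescence and thus being hit by the empty set. This suggests running the induction on $k$: use the assumed \EP property of $(k-1)$-arborescences either to finish directly, when a bounded set hits all of them (then it hits all $k$-arborescences too), or else to pass to the case where $D$ carries many vertex-disjoint $(k-1)$-arborescences, and argue inside that structure, either extending a bounded fraction of them to pairwise disjoint $k$-arborescences by routing fresh sources into them, or exhibiting a small set that blocks every such extension. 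Making this last case work --- controlling which $(k-1)$-arborescences can be fed a fresh source, and doing so disjointly --- is where I expect the essential content of the conjecture to lie.
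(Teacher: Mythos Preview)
This statement is a \emph{conjecture}; the paper does not prove it, and neither does your proposal --- you say so yourself, ending with the admission that the routing step ``is where I expect the essential content of the conjecture to lie''. So there is no proof to compare, only two discussions of obstacles.

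Your diagnosis is closely aligned with the paper's. Both of you expect the material of \cref{sec:web_or_tripod} to generalise (given a suitable ``spider'' strengthening of \cref{lem:onion_harvesting}), and both identify the cut-partition step, \cref{lem:cut_partition}, as the real bottleneck, since for $k=2$ it rests on the Matroid Intersection Theorem. The paper goes further than you on this point: it writes down the natural $k$-matroid min--max statement one would want and gives an explicit counterexample already for $k=3$ --- three matroids on $\{0,\dots,3n-1\}$, each free on two of three blocks of size $n$ with the third block consisting of loops, for which the only common independent set is empty yet every tripartition $(X_1,X_2,X_3)$ has $\min_i \rk_{M_i}(U\setminus X_i)\geq n/2$. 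So your remark that $k$-matroid intersection ``admits no exact min--max'' is sharpened by the paper into a refutation of even the approximate version in the abstract matroid setting; only your hope of exploiting the special structure of gammoids coming from a single digraph survives.

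Your suggested workaround --- induction on $k$, invoking the \EP property for $(k{-}1)$-arborescences and then either finishing or working inside a large packing of them --- is not in the paper and is a reasonable new angle. But as you acknowledge, the step of disjointly extending many $(k{-}1)$-arborescences by a fresh source, or cheaply hitting all such extensions, is left entirely open. That is not a flaw in your write-up; it is an accurate statement that the conjecture remains unresolved.
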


If one follows the proof of \cref{thm:main} with the goal of extending the result to
the context of arborescences with more sources, we believe that the part of the proof presented in \cref{sec:web_or_tripod}
does indeed work in this more general setting. This does require some work,
in particular one needs to generalise the Onion Harvesting Lemma of~\cite{bozyk2022digraphs} to this setting.
However, one encounters a significant roadblock in trying to apply the Matroid Intersection Theorem.
To be explicit, what we would ideally need to straight-forwardly apply the same ideas would be a statement along the following
lines: there is a function \(\cbound{g}{} \colon \N \times \N \to \N\) such that given positive
integers \(s\) and \(k\), together with \(k\) matroids \(M_1, \dots, M_k\) on a shared ground
set \(U\) one can either find a subset \(I \subseteq U\) of size at least \(s\)
independent in all of \(M_1, \dots, M_k\), or
one can partition the ground set into \(k\) sets \(X_1, \dots X_k\) so that
\(\rk_{M_i}(U \setminus X_i) \leq \cbound{g}{}(s, k)\) for each $i\in [k]$. Unfortunately, this statement is false
already for \(k = 3\); here is an example.

Let \(U = \{0,\dots, 3n - 1\}\) and for $i\in [3]$, let \(M_i\) be the matroid of on ground set $U$ in which the independent sets are all the subsets of 
\(U \setminus \{n(i-1) ,\ldots, n \cdot i-1\}\).
Observe that every $u\in U$ is a \emph{loop} (an element such that $\{u\}$ is not independent) in one of the matroids \(M_i\), hence there is no non-empty common independent set. But if we take
any partition \((X_1, X_2, X_3)\) of $U$ such that \(\rk_{M_1}(U \setminus X_1) \leq C\),
then \(X_1\) must contain at least \(n - C\) elements
from each of the two intervals \([n, 2n)\), and \([2n, 3n)\). Thus \(\rk_{M_i}(U \setminus X_i) \geq \rk_{M_i}(X_1) \geq n - C\)
whenever \(i \neq 1\). Setting $C=n/2$, we see that $\min_{i\in [3]} \rk_{M_i}(U \setminus X_i)\geq n/2$, and $n$ can be chosen arbitrarily.

Therefore, we believe that any proof of~\cref{conj:arb} must clear this hurdle with new ideas, or alternatively use a completely new approach.

\bibliographystyle{alpha}
\bibliography{bibliography}
\end{document}